\numberwithin{equation}{section}
\newcommand{\Fbar}{\overline{\mathbb{F}}}
\newcommand{\Z}{\mathbb{Z}}
\newcommand{\Q}{\mathbb{Q}}
\newcommand{\CC}{\mathbb{C}}
\newcommand{\cA}{\mathcal{A}}
\newcommand{\cM}{\mathcal{M}}
\newcommand{\cS}{\mathcal{S}}
\newcommand{\cJ}{\mathcal{J}}
\newcommand{\F}{\mathbb{F}}
\newcommand{\PGL}{\mathrm{PGL}}
\renewcommand{\O}{\mathcal{O}}
\renewcommand{\epsilon}{\varepsilon}
\renewcommand{\P}{\mathbb{P}}
\renewcommand{\cJ}{\mathcal{J}}
\newcommand{\Jac}{\mathcal{J}}
\newcommand{\dirlim}{\varinjlim}
\renewcommand{\o}{\varnothing}
\renewcommand{\o}{\varnothing}
\newcommand{\Car}{\mathcal{C}}
\newcommand{\rank}{\mathrm{rank}}
\newcommand{\Aut}{\mathrm{Aut}}
\newcommand{\Hom}{\mathrm{Hom}}
\newcommand{\prank}{p\text{-}\rank}
\newcommand{\tworank}{2\text{-}\rank}
\newcommand{\dd}{\mathrm{d}}
\newcommand{\<}{\left \langle}
\renewcommand{\>}{\right \rangle}
\newtheorem{thm}{Theorem}[section]
\newtheorem{lem}[thm]{Lemma}
\newtheorem{prop}[thm]{Proposition}
\newtheorem{cor}[thm]{Corollary}
\theoremstyle{definition}
\theoremstyle{remark}
\newtheorem{rem}[thm]{Remark}
\newtheorem{exmp}[thm]{Example}
\begin{document}

\author{Du\v san Dragutinovi\'c}
\keywords{Supersingular curves, genus four, characteristic two, Ekedahl-Oort types}
\address{Mathematical Institute, Utrecht University,
P.O. Box 80010, 3508 TA Utrecht, The Netherlands}
\email{d.dragutinovic@uu.nl}

\title[Supersingular curves of genus four in characteristic two]{Supersingular curves of genus four\\ in characteristic two}

\maketitle

\begin{abstract}
We describe the intersection of the Torelli locus $j(\cM_4^{ct}) = \cJ_4$ with Newton and Ekedahl-Oort strata related to the supersingular locus in characteristic two. We show that the locus of supersingular Jacobians $\cS_4\cap\cJ_4$ in characteristic two is pure of dimension three. One way to obtain that result uses an analysis of the data of smooth genus four curves and principally polarized abelian fourfolds defined over $\mathbb{F}_2$, and another involves studying the Ekedahl-Oort types and using the indecomposability of some $\mathrm{BT}_1$ group schemes.
\end{abstract}

\section{Introduction}

Suppose $k$ is a field of characteristic $p>0$ and $g\geq 2$. We would like to describe the loci of curves $C$ defined by invariants of the $p$-torsion of its Jacobian, for $C$ either a smooth (irreducible, projective, algebraic) genus-$g$ curve over $k$ or a genus-$g$ stable curve over $k$ of compact type (i.e., a stable curve whose dual graph is a tree).

Let $\cM_g$ be the moduli space of smooth genus-$g$ curves, $\cM^{ct}_g$ the moduli space of stable genus-$g$ curves of compact type, and $\cA_g$ the moduli space of principally polarized $g$-dimensional abelian varieties. To a genus-$g$ stable curve of compact type $C$, we can attach its Jacobian variety $\Jac_C$, and this induces the Torelli morphism $$j: \cM^{ct}_g \to \cA_g.$$ We call $j(\cM_g^{ct}) = \cJ_g$ the Torelli locus, and $j(\cM_g) = \cJ_g^0 \subset \cJ_g$ the open Torelli locus. It is well-known that $\dim \cM_g = \dim \cM^{ct}_g = \dim \cJ_g^0 = \dim \cJ_g =  3 g - 3$, and $\dim \cA_g = \frac{g(g + 1)}{2}$.

We say that an abelian variety $A$ over $k$ of dimension $g>0$ is \textit{supersingular} if there is an isogeny $$A \sim E^g,$$ where $E$ is a supersingular elliptic curve over $\bar{k}$, that is $E[p](\bar{k}) = \{O\}$. 
We say that a curve $C$ of compact type is supersingular if its Jacobian is supersingular.  With $\cS_g \subset \cA_g$, we denote the locus of supersingular $g$-dimensional abelian varieties. A lot of things are known about $\cS_g$; e.g.,  \cite{lioort}, Theorem 4.9 gives $$\dim \cS_g = \left \lfloor \frac{g^2}{4} \right \rfloor, $$ for any $g\geq 1$. Furthermore, using the theory of Dieudonn\'e modules and the Hecke correspondence, some stratifications of $\cA_g$ over $k$ that we mention below are described very well; see for example \cite{normanoort}, \cite{lioort}, and \cite{chaioort}.

We are mainly interested in the locus of supersingular curves, and hence in $\cS_g\cap \cJ_g$, the locus of supersingular $g$-dimensional Jacobians. For $g = 2$ or $g = 3$, $\cJ_g^0$ is dense in $\cA_g$, so we get information about the supersingular curves directly. However, when $g\geq 4$, we know very little about the supersingular curves in general. For example, even for $g = 4$, it is only recently shown in \cite{kudoharashitasenda} that  for an arbitrary prime $p>0$ there is a supersingular smooth curve of genus $4$. The corresponding question for  arbitrary $p>0$ and $g\geq 5$ is still open. Furthermore, the knowledge about the structure of the locus of supersingular curves and some related loci is very limited. See \cite{pries_current_results}, where some of the open questions are mentioned. The biggest obstacle is that we cannot directly use the tools developed for working in $\cA_g$. 

For principally polarized $g$-dimensional abelian varieties $A$ over $k$, there are several invariants that introduce the stratifications of $\cA_g$: \textit{$p$-rank}, \textit{Newton polygon}, \textit{Ekedahl-Oort type}, and \textit{$a$-number}. The \textit{$\prank$} of $A$ is defined as the number $f$, with $0\leq f\leq g$, so that $$\#A[p](\bar{k}) = p^f.$$ Let $A[p^{\infty}] = \dirlim A[p^n]$ be the $p$-divisible group of $A$. By the Dieudonn\'e-Manin classification \cite{manin}, there are certain $p$-divisible groups $G_{c, d}$ for $c, d\geq 0$ relatively prime integers, so that $A[p^{\infty}]$ is up to isogeny (of $p$-divisible groups) equal to $$\oplus_{\lambda = \frac{d}{c + d}}G_{c, d},$$ for a unique choice of \textit{slopes} $\lambda$. The Newton polygon of $A$ is defined as the collection of those $\lambda$ counted with multiplicities. It holds that $$A \text{ is supersingular if and only if } A[p^{\infty}] \sim (G_{1, 1})^g.$$ 
The $\prank$ and the Newton polygon are isogeny invariants. The number of slopes $\lambda = 0$ equals the $\prank$ of $A$, and in particular, the supersingular locus $\cS_g$ is contained in the $\prank$ zero locus $V_0$. The isomorphism class of $p$-torsion group schemes $A[p]$ is determined by the Ekedahl-Oort type of $A$, which we present with the Young type $$\mu = [\mu_1, \mu_2, \ldots, \mu_n], $$ with $g\geq \mu_1 > \ldots > \mu_n > 0$; equivalently, the Ekedahl-Oort type is also determined by the final type $\nu = \{\nu(1), \nu(2), \ldots, \nu(g)\}$, connected with $\mu$ via $\mu_j = \#\{i: 1\leq j \leq g, \nu(i) + j \leq g\}$. The $a$-number of $A$ is $$a(A) = \dim_k\Hom(\alpha_p, A),$$ where $\alpha_p$ is the group scheme defined as the kernel of Frobenius on the additive group $\mathbb{G}_a$. If $A$ has the Ekedahl-Oort type $[\mu_1, \ldots, \mu_n]$, then $\prank(A) = g - \mu_1$ and $a(A) = n$. In general, we have $1\leq \prank(A) +  a(A) \leq g$. For a smooth curve $C$, we define these invariants in terms of its Jacobian $\Jac_C$ - e.g., $\prank(C) = \prank(\Jac_C)$. See \cite{oort_mixed_char} or \cite{pries_current_results} for more details. 
\\

As one of the first steps, we describe what happens when $g = 4$ and $p = 2$ and consider the previously introduced moduli spaces over $\Fbar_2$, that is, $\cM_4 = \cM_4 \otimes \Fbar_2$, $\cA_4 = \cA_4 \otimes \Fbar_2$, etc. A result from \cite{ddmt} gives that $\cS_4 = \cS_4 \otimes \Fbar_2$ is irreducible. We investigate the locus of supersingular Jacobians of genus-$4$ curves, i.e., the intersection of the $9$-dimensional $\cJ_4$ with $4$-dimensional $\cS_4$ inside the $10$-dimensional $\cA_4$. The stack $\cA_4$ is smooth, and thus the codimension of an intersection in $\cA_4$ is the sum of codimensions at most; we use this in the rest of the paper without explicitly mentioning it. By considering $\F_2$-points of the loci $\cJ_4$ and $\cS_4$ and using the data from \cite{lmfdb} and \cite{xarles}, we get that $\cS_4$ is not contained in the Torelli locus $\cJ_4$. That leads to the conclusion that each irreducible component of  $$(\cS_4 \cap \cJ_4)\otimes \Fbar_2$$ is of dimension three; see Theorem \ref{thm:supersingular_curves_dim3}. In Remarks \ref{rem:eo43_implies_ss3dim} and \ref{rem:supersingular_locus_3dim}, we offer alternative proofs.

Then, we consider the Ekedahl-Oort loci and their connections with the Newton polygon strata of $\tworank$ zero and describe their intersections with $\cJ_4$. There are eight Ekedahl-Oort, and three Newton polygon strata inside the $\tworank$ zero locus $V_0$ of $\cA_4$. The mentioned Ekedahl-Oort ones $Z_{\mu}$ are defined by $$\mu \in \{[4], [4, 1], [4, 2], [4, 3], [4, 2, 1], [4, 3, 1], [4, 3, 2], [4, 3, 2, 1]\},$$ and consist of principally polarized abelian fourfolds $A$ with the type $\mu_A = \mu$.
The Newton polygon ones are defined by the slope sequences $$\left (\frac{1}{2}, \frac{1}{2}, \frac{1}{2}, \frac{1}{2}, \frac{1}{2}, \frac{1}{2}, \frac{1}{2}, \frac{1}{2}\right ), \left (\frac{1}{3}, \frac{1}{3}, \frac{1}{3}, \frac{1}{2}, \frac{1}{2}, \frac{2}{3}, \frac{2}{3}, \frac{2}{3}\right ), \text{ and } \left (\frac{1}{4}, \frac{1}{4}, \frac{1}{4}, \frac{1}{4}, \frac{3}{4}, \frac{3}{4}, \frac{3}{4}, \frac{3}{4}\right ), $$ and we denote the corresponding locally closed subsets of $\cA_4$ (consisting of principally polarized abelian varieties having those Newton polygons) with $\cS_4$,  $\mathcal{N}_{1/3}$, and $\mathcal{N}_{1/4}$. Denote $\mathcal{N}_{\leq 1/3} = \mathcal{N}_{1/3} \sqcup {\cS_4}$ and $\mathcal{N}_{\leq 1/4} = \mathcal{N}_{1/4} \sqcup \mathcal{N}_{1/3} \sqcup {\cS_4}$.

We show in Theorem \ref{thm:generic_a_num} that any generic point of $(V_0\cap \cJ_4)\otimes \Fbar_2$ has $a$-number one.
Furthermore, we show in Corollary \ref{cor:np_conclusion} and Corollary \ref{cor:eo_conclusion} that $$(\mathcal{N}_{\leq 1/3}\cap \cJ_4 )\otimes \Fbar_2 \text{ and } ( \mathcal{N}_{\leq 1/4}\cap \cJ_4 )\otimes \Fbar_2$$ are of the expected dimensions, as well as $$( Z_{\mu}\cap \cJ_4 )\otimes \Fbar_2$$ for Young types $\mu = [4], [4, 1], [4, 2]$. See \cite{zhou_genus4} for related questions in characteristic three.  Consequently, we get information about the Newton polygon and the Ekedahl-Oort stratification of $\cM_4$.

\subsection*{Acknowledgement}
The author is grateful to his supervisor Carel Faber for all the discussions and valuable comments and to Stefano Marseglia for the conversation and the help with the code supporting Example \ref{ex:isog_class_ss_bkm}. The author is supported by the Mathematical Institute of Utrecht University. 

\newpage
\section{Abelian varieties over finite fields}
Here, we recall some well-known facts about abelian varieties defined over finite fields; for example, see \cite{lmfdb_paper}.

Let $[\cA_4(\F_q)]$ denote the set of $\F_q$-isomorphism classes of principally polarized abelian fourfolds over $\F_q$ and $[\mathcal{M}_4(\F_q)]$ the set of $\F_q$-isomorphism classes of genus-$4$ smooth curves over $\F_q$. In addition, let us denote $[\mathcal{M}^0_4(\F_q)]$ and $[\mathcal{H}_4(\F_q)]$ for the sets of respectively non-hyperelliptic and hyperelliptic genus-$4$ smooth curves over $\F_q$.

Let $q = p^r$, with a prime number $p$ and $r\in \Z_{>0}$, and let $A$ be an abelian variety of dimension $g$ over $\F_q$. We denote the $q$-Frobenius with $F = F_q$ and write $$P_{A/\F_q} = P_{A/\F_q}(t) = \det (t - F| H^1(A))$$ for its characteristic polynomial (which is of degree $2g$), called \allowbreak {\textit{the Weil $q$-polynomial of $A$}.}\allowbreak 

If $B$ is another abelian variety over $\F_q$, the Honda-Tate theorem gives us that $P_{A/\F_q}$ divides $P_{B/\F_q}$ if and only if $B$ is isogenous over $\F_q$ to a product in which $A$ occurs as a factor. In particular, $$P_{A/\F_q} = P_{B/\F_q} \text{ if and only if } A \text{ and } B \text{ are isogenous over } \F_q.$$ Moreover, the characteristic polynomial $P_{A/\F_q}$ determines the Newton polygon of $A$; see \cite{lmfdb_paper}, Section 2.4.

If $P_{A/\F_q}(t) = \prod_{i = 1}^{2g}(t - \alpha_i)$ is the Weil polynomial for $A$, then the Weil polynomial associated with the base change of $A$ to $\F_{q^n}$ is 
\begin{equation}
\label{eq:base_ext_avs}
P_{A/\F_{q^n}}(t) = \prod_{i = 1}^{2g}(t - \alpha_i^n). 
\end{equation}
\noindent
That follows from the fact that the $\alpha_i$ are the eigenvalues of the $q$-Frobenius on $H^1(A)$ and the eigenvalues of the $q^n$-Frobenius are then the $\alpha_i^n$.

For a stable curve  $C$ over $\F_{q}$ of compact type, we denote with $$P_{C/\F_{q}}(t):= P_{\Jac_C/\F_{q}}(t)$$ the characteristic polynomial of $\Jac_C$ over $\F_{q}$. 

When $C$ is a smooth curve of genus $g$ defined over $\F_q$, we can explicitly determine $P_{C/\F_{q}}$ by counting the number of points of $C$ over some extensions of the base field. Consider the zeta-function of $C/\F_q$ defined by $$Z(C/\F_q, t) = \exp \left (\sum_{n\geq 1}\# C(\F_{q^n})\frac{t^n}{n} \right ). $$ The Weil conjecture for curves gives us that 
\begin{equation}
  Z(C/\F_q, t) = \frac{L(t)}{(1 - t)(1 - qt)},
  \label{eq:weil_conj}
\end{equation}
where $L(t) = t^{2g}P_{\Jac_{C}/\F_q}(1/t)$ is the $L$-polynomial of $\Jac_C/\F_q$. In particular, using the numbers $\#C(\F_{q^i})$ for $i = 1, \ldots, g$, we can determine $P_{C/\F_{q}}$, so the isogeny class of $\Jac_C$ over $\F_q$. In particular, they also determine the Newton polygon of $C$: if we write $L(t) = \sum_{i = 0}^{2g}a_it^i$ (and recall $q = p^r$), then the Newton polygon of $C$ is the lower convex hull of the points $$\{(i, \mathrm{val}_2(a_i)/r): 0\leq i\leq 2g\}$$ with $\mathrm{val_2}$ the $2$-adic valuation. $C$ is a supersingular curve if and only if its Newton polygon is a straight line with slope $1/2$ starting at $(0, 0)$ and ending at $(2g, g)$. See further \cite{pries_current_results}, Section 2.

To additionally connect the curves with the corresponding Jacobian varieties in a way that we will use in our arguments, we consider the following well-known lemma. 

\begin{lem}
\label{lem:curves_jacobians_autogp}
Let $C$ be a smooth curve over $k$. Then, the map $\epsilon \mapsto (\epsilon^{-1})^*$ gives a group homomorphism $\varphi: \Aut_{k} C \to \Aut_{k} \Jac_C$
such that: 
\begin{enumerate}
    \item $\varphi$ is injective;
    \item if $C$ is hyperelliptic, then $\varphi$ is an isomorphism;
    \item if $C$ is not hyperelliptic, then $$\Aut_{k} \Jac_C \cong \{\pm 1\}\times \Aut_{k} C.$$
\end{enumerate}
\end{lem}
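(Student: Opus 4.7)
The plan is to verify the properties of $\varphi$ in three stages: well-definedness, injectivity, and then parts (2)--(3) via the strong Torelli theorem. Throughout I interpret $\Aut_{k}\Jac_C$ as the automorphism group of the principally polarized abelian variety $(\Jac_C,\Theta)$, and assume $g=g(C)\geq 2$ (the relevant range for this paper). The pullback $\epsilon^{*}\colon\Pic^{0}(C)\to\Pic^{0}(C)$ is a contravariant group automorphism, so the prescription $\varphi(\epsilon)=(\epsilon^{-1})^{*}$ produces a covariant group homomorphism. Since the theta divisor is, up to translation, the image of $C^{(g-1)}$ under Abel--Jacobi and this image is $\Aut_{k}(C)$-equivariant, $\varphi(\epsilon)$ preserves the principal polarization on $\Jac_C$.

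For (1), I would compare tangent spaces at the origin: the action of $\varphi(\epsilon)$ on $T_{0}\Jac_C\cong H^{0}(C,\Omega_{C}^{1})^{\vee}$ is the transpose of the pullback of $\epsilon$ on $H^{0}(C,\Omega_{C}^{1})$. If $\epsilon\ne\text{id}_{C}$, Riemann--Hurwitz applied to $\pi\colon C\to C/\langle\epsilon\rangle$ forces $g(C/\langle\epsilon\rangle)<g$; since $H^{0}(C,\Omega_{C}^{1})^{\langle\epsilon\rangle}=\pi^{*}H^{0}(C/\langle\epsilon\rangle,\Omega^{1})$ has dimension $g(C/\langle\epsilon\rangle)<g$, the action of $\epsilon$ on $H^{0}(C,\Omega_{C}^{1})$ is nontrivial, hence so is $\varphi(\epsilon)$ on $T_{0}\Jac_C$, contradicting $\varphi(\epsilon)=\text{id}$.

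For (2) and (3), I would invoke the strong Torelli theorem (as in Milne's chapter of the Cornell--Silverman volume). The essential observations are: (i) $-1_{\Jac_C}\in\Aut_{k}(\Jac_C,\Theta)$ always; (ii) if $C$ is hyperelliptic with involution $\iota$ and Weierstrass point $P_{0}$, then $\iota(P)+P\sim 2P_{0}$ yields $\varphi(\iota)[P-P_{0}]=[\iota(P)-P_{0}]=-[P-P_{0}]$, so $\varphi(\iota)=-1$; (iii) if $C$ is non-hyperelliptic and $\varphi(\epsilon)=-1$ for some $\epsilon$, then $[\epsilon(P)+P]\in\Pic^{2}(C)$ would be independent of $P$, producing a $g^{1}_{2}$ and contradicting non-hyperellipticity. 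Strong Torelli asserts that every element of $\Aut_{k}(\Jac_C,\Theta)$, possibly after composition with $-1$, lies in $\varphi(\Aut_{k} C)$. Combined with (ii) this gives $\Aut_{k}(\Jac_C,\Theta)=\varphi(\Aut_{k} C)$ in the hyperelliptic case, proving (2); combined with (iii) it gives $\Aut_{k}(\Jac_C,\Theta)=\varphi(\Aut_{k} C)\times\{\pm 1\}\cong\Aut_{k} C\times\{\pm 1\}$ in the non-hyperelliptic case, proving (3). The main obstacle is pinning down the correct form of strong Torelli and verifying that the cosets $\varphi(\Aut_{k} C)$ and $-\varphi(\Aut_{k} C)$ are disjoint in the non-hyperelliptic case; the latter is exactly what (iii) achieves.
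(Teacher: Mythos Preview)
Your overall strategy—deriving the lemma from the strong Torelli theorem—is exactly what the paper does: its proof is a bare citation to \cite{milne}, Theorem~12.1 and \cite{howe}, Proposition~4.1. Your arguments for points (ii), (iii), and the surjectivity direction in (2)--(3) are correct and amount to a clean unpacking of that reference.

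Your separate argument for injectivity, however, has a real gap in characteristic~$2$, which is precisely the setting of this paper. The step
\[
H^{0}(C,\Omega_{C}^{1})^{\langle\epsilon\rangle}=\pi^{*}H^{0}\bigl(C/\langle\epsilon\rangle,\Omega^{1}\bigr)
\]
can fail when $p$ divides $\lvert\langle\epsilon\rangle\rvert$. Concretely, take $C$ hyperelliptic in characteristic~$2$ with involution $\iota$. Then $C/\langle\iota\rangle\cong\mathbb{P}^{1}$ has genus~$0$, so the right-hand side vanishes; but $\iota$ fixes $x$ and hence fixes each basis differential $x^{i}\,dx/h(x)$, so the left-hand side is all of $H^{0}(C,\Omega_{C}^{1})$. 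Thus a nontrivial $\epsilon$ can act trivially on $H^{0}(C,\Omega_{C}^{1})$, and the implication you rely on breaks. (This does not contradict injectivity of $\varphi$, since here $\varphi(\iota)=-1\neq\mathrm{id}$; it only invalidates your proposed proof.)

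The repair is to drop the tangent-space argument entirely and extract injectivity from the \emph{uniqueness} clause of strong Torelli, which you are already citing. Milne's theorem asserts not only that every polarized automorphism is of the form $\pm\varphi(\alpha)$, but that $\alpha$ (and, in the non-hyperelliptic case, the sign) is uniquely determined. Applied to $\mathrm{id}=\varphi(\mathrm{id})$, uniqueness gives $\epsilon=\mathrm{id}$ directly in the non-hyperelliptic case; in the hyperelliptic case it gives $\epsilon\in\{\mathrm{id},\iota\}$, and your computation $\varphi(\iota)=-1$ then rules out $\iota$.
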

\begin{proof}
This follows from Torelli's theorem \cite{milne}, Theorem 12.1; see also \cite{howe}, Proposition 4.1.
\end{proof}

\begin{rem}
The Torelli morphism induces a bijection of sets $[\mathcal{H}_4(\F_q)]$ and $[j(\mathcal{H}_4)(\F_q)]$. Furthermore, when $C$ is a non-hyperelliptic curve in $[\cM_4(\F_q)]$, then there are precisely two elements in $[\cA_4(\F_q)]$ whose representatives are isomorphic over $\Fbar_q$ to $\Jac_C$:  $\Jac_C$ and its twist $\Jac_C^{-1}$. For more details, see for example \cite{lauter}, Appendix, or \cite{berfabgeer}, Sections 8.1 and 8.2.
\end{rem}

\subsection{Computing polarizations of abelian varieties over finite fields}

Let  $h = h(x)$ be a square-free Weil polynomial without real roots and set $L = \Q[x]/(h)$. In \cite{bkm}, Bergstr\"om, Karemaker, and Marseglia gave a description of the polarizations of abelian varieties over a finite field inside the fixed isogeny class defined by $h$ under the assumption that at least one in that isogeny class admits a lifting to characteristic zero for which the reduction morphism induces an isomorphism of endomorphism rings. That description relies on the antiequivalence of categories 
\begin{equation}
\mathcal{G}: \mathrm{AV}_{\F_p}(h) \to \mathcal{I}(R_h),
\label{eq:CS_equiv}
\end{equation}
where $\mathrm{AV}_{\F_p}(h)$ is the category of abelian varieties over $\F_p$ inside the $\F_p$-isogeny class defined by the polynomial $h$ (using Honda-Tate theory) and with $\F_p$-morphisms, and $\mathcal{I}(R_h)$ is the category of fractional $R_h$-ideals in $L$ with $R_h$-linear morphisms, where $R_h$ is the order in $L$ generated by $\pi = x \text{ }(\text{mod }h)$ and $p/\pi$. The equivalence is provided by \cite{mars}, Theorem 4.3, and obtained using the Centeleghe-Stix equivalence from \cite{centstix}. 

Under $\mathcal{G}$, for $B, B' \in \mathrm{AV}_{\F_p}(h)$ we have $\mathcal{G}(\Hom(B, B')) = (\mathcal{G}(B): \mathcal{G}(B'))$, and if $\mathcal{G}(B) = I$ then $\mathcal{G}(B^{\vee}) = \Bar{I}^t$, where $I^t = \{a\in L: \mathrm{Tr}_{L/\Q}(aI) \subseteq \Z\}$ is the trace dual ideal of $I$ and $\overline{I} = \{\bar{x}: x\in I\}$ with $x\mapsto \bar{x}$ the involution of $L$. 

Let $L_{\mathbb{R}}$ be the unique totally real subalgebra of $L$, let $L^{+}$ be the set of totally positive elements of $L_{\mathbb{R}}$, and let $S_{\mathbb{R}}^* = S^*\cap L_{\mathbb{R}}$ be the group of totally real units in $S$. Furthermore, for a CM-type $\Phi$ of $L$, let 
$$\Sigma_{\Phi} = \begin{Bmatrix}
 & S \text{ is Gorenstein and } S = \Bar{S}, \text{ and there exists }  \\ S \subseteq L \text{ order}: &\text{ } A_0\in \mathrm{AV}_{\F_p}(h) \text{ with CM-type }\Phi,  \text{ and } \mathrm{End}(A_0) = S, \\ & \text{ that admits a canonical lifting to a }p\text{-adic field }K
\end{Bmatrix}.$$

 Consider an abelian variety $B_0$ in $\mathrm{AV}_{\F_p}(h)$, and denote $I = \mathcal{G}(B_0)$, $T = (I:I)$. Let $\mathcal{T}$ be a transversal of the quotient $T^*/\<v\bar{v}: v \in T^*\>$, i.e., a set containing exactly one representative for each class in $T^*/\<v\bar{v}: v \in T^*\>$. If $B_0$ is principally polarized, then, in particular, it follows that $B_0 \cong B_0^{\vee}$, so there is an element $i_0\in L^*$ such that $i_0 \bar{I}^t = I$. If such $i_0$ exists, for an arbitrary $\CC$-valued CM-type, we denote  $$\mathcal{P}_{\Phi}^{1}(I) = \{i_0u: u \in \mathcal{T} \text{ such that }i_0u \text{ is totally imaginary and }\Phi\text{-positive}\}. $$ Otherwise, when $B_0$ is not isomorphic to $B_0^{\vee}$, such $i_0$ does not exist, and we  denote $$\mathcal{P}_{\Phi}^{1}(I) = \o,$$ for all $\Phi$. 

We collect some results of \cite{bkm} into the following Proposition. The authors of \cite{bkm} offer an algorithm for computing representatives of principal polarizations of any abelian variety inside a fixed isogeny class whenever \eqref{eqthm:552} is satisfied, using that \eqref{eqthm:552} can be checked algorithmically. We use this in {Section \ref{sec:jacobians} to prove Theorem \ref{thm:supersingular_curves_dim3}.

\begin{prop} Let $\Phi_b$ be a CM-type for $L = \Q[x]/(h)$ for which $\Sigma_b$ is non-empty and let $S\in \Sigma_{\Phi_b}$ be an order. For  an abelian variety $B_0 \in \mathrm{AV}_{\F_p}(h)$ let $I = \mathcal{G}(B_0)$ and $T = (I:I)$, and let $\mathcal{T}$ be a transversal of $T^*/\<v\bar{v}: v\in T^*\>$ as above. If it holds that 
\begin{equation}
\text{for every }\xi \in S_{\mathbb{R}}^* \text{ we have }\xi L^{+}\cap \mathcal{T} \neq \o,
\label{eqthm:552}
\end{equation}
then the complete set of representatives of principal polarizations of $B_0$ up to isomorphism is in bijection with the set $\mathcal{P}^{1}_{\Phi_b}(I)$.
\label{thm:552}
\end{prop}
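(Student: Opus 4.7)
The plan is to assemble the proposition by feeding the data $(B_0, I, T, \mathcal{T}, \Phi_b, S)$ into the framework of \cite{bkm}. I would begin by transporting the question across the antiequivalence $\mathcal{G}$ in \eqref{eq:CS_equiv}: a principal polarization of $B_0$ is by definition an isomorphism $\lambda: B_0 \to B_0^{\vee}$ (satisfying a positivity condition), and under $\mathcal{G}$ this corresponds to an $R_h$-linear isomorphism $\bar{I}^t \to I$, i.e., to multiplication by some $i_0 \in L^*$ with $i_0 \bar{I}^t = I$. If no such $i_0$ exists then $B_0 \not\cong B_0^{\vee}$, both sides of the claimed bijection are empty, and there is nothing to prove; so I would assume $i_0$ exists.

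Next I would quotient out automorphisms. Every other element $i_0' \in L^*$ with $i_0' \bar{I}^t = I$ differs from $i_0$ by a unit of $T = (I:I)$, and two such choices give isomorphic polarized abelian varieties precisely when they differ by an element of the form $v\bar{v}$ with $v\in T^*$ (because $\mathcal{G}$ turns the pull-back action of $\Aut(B_0)$ on polarizations into the symmetric square action on $T^*$). Hence the isomorphism classes of isomorphisms $B_0 \to B_0^{\vee}$ are parametrized by $i_0\cdot T^*/\langle v\bar{v}: v\in T^*\rangle$, and choosing the transversal $\mathcal{T}$ gives exactly the set $\{i_0 u : u\in \mathcal{T}\}$ as representatives.

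It remains to cut out the actual polarizations among these self-dual isomorphisms, and this is where the CM-type $\Phi_b$ and the hypothesis on $\Sigma_{\Phi_b}$ enter. Picking $S\in \Sigma_{\Phi_b}$ and the corresponding $A_0$ with canonical lifting to characteristic zero allows one to transport the complex-analytic positivity criterion down to $\F_p$: an $i_0 u$ represents a polarization precisely when $i_0 u$ is totally imaginary and $\Phi_b$-positive, which is how $\mathcal{P}^1_{\Phi_b}(I)$ is defined. This part I would quote from \cite{bkm}; the key input is that the canonical lifting preserves the endomorphism ring $S$ and thus preserves the action of $S$ on the tangent space, so that the CM-type comparison is well-defined.

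The subtle step, and the one I expect to be the genuine obstacle, is reconciling the positivity condition with the choice of transversal $\mathcal{T}$: a priori the subgroup $\langle v\bar{v}\rangle$ and the cone $L^{+}$ need not cooperate, and different representatives within one class could give incompatible signs under some embeddings. The hypothesis \eqref{eqthm:552} is precisely the statement that every real unit $\xi\in S^*_{\mathbb{R}}$ can be twisted into $\mathcal{T}$ by a totally positive element, which is what one needs to ensure that each equivalence class in $T^*/\langle v\bar{v}\rangle$ contains at most one $\Phi_b$-positive totally imaginary representative among the $i_0 u$. Granting this, the set $\mathcal{P}^1_{\Phi_b}(I)$ really is a set of distinct representatives, and the bijection with isomorphism classes of principal polarizations of $B_0$ follows.
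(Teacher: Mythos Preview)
The paper's own proof is a one-line citation: it simply invokes \cite{bkm}, Theorem 5.5, Theorem 4.10, and Proposition 5.2, with no unpacking. Your proposal is therefore not an alternative argument but an elaboration of what those cited results contain; in that sense you are doing the same thing, only with more commentary, and you explicitly say you would quote the key positivity step from \cite{bkm} as well.

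Your overall architecture is sound: transport across $\mathcal{G}$, identify candidate polarizations with $\{i_0 u : u\in\mathcal{T}\}$, then impose the totally-imaginary and $\Phi_b$-positivity conditions coming from the canonical lift. The one place where your explanation slips is the role of hypothesis \eqref{eqthm:552}. You describe it as guaranteeing that each class in $T^*/\langle v\bar v\rangle$ contains \emph{at most one} $\Phi_b$-positive totally imaginary representative among the $i_0 u$. But elements of $\langle v\bar v\rangle$ are totally positive real, so being totally imaginary and $\Phi_b$-positive is automatically constant on each class; uniqueness within a class is not the issue. The genuine subtlety that \eqref{eqthm:552} addresses is that $\Phi_b$ is attached to the order $S$ (via some $A_0$ with $\mathrm{End}(A_0)=S$), whereas $B_0$ has endomorphism ring $T$, which may differ from $S$; a priori the ``correct'' CM-type governing positivity for $B_0$ need not be $\Phi_b$. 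Condition \eqref{eqthm:552} on the totally real units of $S$ is what allows one to use the single fixed $\Phi_b$ uniformly across the isogeny class and still capture exactly the principal polarizations of $B_0$ --- this is the content of \cite{bkm}, Proposition 5.2 and Theorem 5.5 that the paper is citing. So your sketch is fine as a reader's guide, but the justification you give for the crucial hypothesis misidentifies what it is actually doing.
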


\begin{proof}
This follows from \cite{bkm}, Theorem 5.5,  Theorem 4.10 and Proposition 5.2.
\end{proof}

\begin{rem}
\label{rmk:reflex_cond}
\cite{bkm}, Corollary 4.9 gives that $\Sigma_{\Phi}$ is non-empty under the assumption that $(L, \Phi)$ satisfies the \textit{generalized residue reflex condition} (see \cite{bkm}, Definition 2.14). 
\end{rem}

\newpage
\section{Jacobian varieties over $\F_2$}
\label{sec:jacobians}

In this section, we describe the Jacobians of genus-$4$ curves of compact type over $\F_2$. By comparing the stack counts in certain $\F_2$-isogeny classes of principally polarized abelian varieties with the ones coming from Jacobians, i.e., by comparing $\F_2$-points of $\cS_4 = \cS_4 \otimes \Fbar_2$ and $\cJ_4 = \cJ_4 \otimes \Fbar_2$,  we will conclude that the supersingular locus $\cS_4$ is not contained in $\cJ_4$.    

Let $C$ be a stable genus-$4$ curve of compact type, defined over $\F_2$. It is well-known that the Jacobian of $C$ is isomorphic to the product of Jacobians of its (non-rational) irreducible components. Write $C_{g, \F_{2^n}}^{(i)}$ for a smooth curve of genus $g\geq 1$ defined over $\F_{2^n}$, which is an irreducible component of $C$; the $(i)$-notation is to denote, in general, distinct curves. Up to isomorphism over $\F_2$, we have the following list of possibilities for a Jacobian of a genus-$4$ curve of compact type defined over $\F_2$.

\begin{enumerate}[(i)]
    \item $\Jac_C = \Jac_{C_{4, \F_2}^{(0)}}$.
    \item $\Jac_C = \Jac_{C_{3, \F_2}^{(0)}} \times\Jac_{C_{1, \F_2}^{(0)}}$.
    \item $\Jac_C = \Jac_{C_{2, \F_2}^{(1)}}\times\Jac_{C_{2, \F_2}^{(2)}}$.
    \item $\Jac_C = \Jac_{C_{1, \F_2}^{(1)}}\times\Jac_{C_{1, \F_2}^{(2)}}\times \Jac_{C_{1, \F_2}^{(3)}}\times\Jac_{C_{1, \F_2}^{(4)}}$.
    \item $\Jac_C = \Jac_{C_{2, \F_2}^{(0)}}\times\Jac_{C_{1, \F_2}^{(1)}}\times\Jac_{C_{1, \F_2}^{(2)}}$.
    \item $\Jac_C = \Jac_{C_{2, \F_4}^{(1)}}\times\Jac_{C_{2, \F_4}^{(2)}}$, where $C_{2, \F_4}^{(1)}$ and $C_{2, \F_4}^{(2)}$ are sent one to another by the Frobenius, i.e., they are conjugated.
    \item $\Jac_C = \Jac_{C_{2, \F_2}^{(0)}}\times\Jac_{C_{1, \F_4}^{(1)}}\times\Jac_{C_{1, \F_4}^{(2)}}$, where $C_{1, \F_4}^{(1)}$ and $C_{1, \F_4}^{(2)}$ are conjugated.
    \item $\Jac_C = \Jac_{C_{1, \F_2}^{(1)}}\times\Jac_{C_{1, \F_2}^{(2)}}\times\Jac_{C_{1, \F_4}^{(3)}}\times\Jac_{C_{1, \F_4}^{(4)}}$, where $C_{1, \F_4}^{(3)}$ and $C_{1, \F_4}^{(4)}$ are conjugated.
    \item $\Jac_C = \Jac_{C_{1, \F_4}^{(1)}}\times\Jac_{C_{1, \F_4}^{(2)}}\times\Jac_{C_{1, \F_4}^{(3)}}\times\Jac_{C_{1, \F_4}^{(4)}}$, where, without loss of generality, ${C_{1, \F_4}^{(1)}}$ and ${C_{1, \F_4}^{(2)}}$, as well as ${C_{1, \F_4}^{(3)}}$ and ${C_{1, \F_4}^{(4)}}$, are two pairs of mutually conjugated curves.
    \item $\Jac_C = \Jac_{C_{1, \F_2}^{(0)}}\times\Jac_{C_{1, \F_8}^{(1)}}\times\Jac_{C_{1, \F_8}^{(2)}}\times\Jac_{C_{1, \F_8}^{(3)}}$, where ${C_{1, \F_8}^{(1)}}, {C_{1, \F_8}^{(2)}}$, and ${C_{1, \F_8}^{(3)}}$ are three irreducible components of $C$ defined over $\F_8$, cyclically permuted by the Frobenius morphism.
    \item $\Jac_C = \Jac_{C_{1, \F_{16}}^{(1)}}\times\Jac_{C_{1, \F_{16}}^{(2)}}\times\Jac_{C_{1, \F_{16}}^{(3)}}\times\Jac_{C_{1, \F_{16}}^{(4)}}$, where all $C_{1, \F_{16}}^{(1)}, C_{1, \F_{16}}^{(2)}, C_{1, \F_{16}}^{(3)}$, and $C_{1, \F_{16}}^{(4)}$ are irreducible components of $C$ defined over $\F_{16}$, cyclically permuted by the Frobenius.  
\end{enumerate}

Let $C$ be a singular, stable genus-$4$ curve defined over $\F_2$, whose Jacobian variety has one of the types from (vi)-(xi), and let $D, D'$ be two components of $C$ defined over $\F_{2^n}$ for $n \in \{2, 3, 4\}$ so that $D$ and $D'$ are conjugated by Frobenius. Since finite fields are perfect, we get that $D$ and $D'$ have the same number of $\F_{2^n}$- and $\F_{2^{2n}}$-points. In particular, they are in the same $\F_{2^n}$-isogeny class, so we have that $(P_{D/\F_{2^n}})^2$, the square of the characteristic polynomial of $\Jac_D$ over $\F_{2^n}$, divides $P_{C/\F_{2^n}}$, the characteristic polynomial of $\Jac_C$ over $\F_{2^n}$.
\\

\noindent
From the database \cite{lmfdb}, we extracted all ($65$) $\F_2$-isogeny classes of supersingular abelian fourfolds. In \cite{xarles}, Xarles determined all smooth genus-$4$ curves over $\F_2$ up to $\F_2$-isomorphism. We used the provided list of curves and their numbers of $\F_{2^r}$-points, for $r = 1, 2, 3, 4$, to deduce which are supersingular and to determine the $\F_2$-isogeny classes of their Jacobians. In addition, for each supersingular genus-$4$ curve $C$ over $\F_2$, we computed the size of $\Aut_{\F_2}(C)$. 

First, we give an example of a smooth, genus-$4$ curve over $\F_2$, which is supersingular.

\begin{exmp}
\label{ex:smooth_ss_curve}
Consider the genus-$4$ curve $C$ defined over $\F_2$ by the equations  $$C:\left\{\begin{matrix}
XY + ZT = 0\\ 
X^2Z + Y^2Z + YZ^2 + X^2T + Y^2T + XT^2 = 0
\end{matrix}\right. \text{ in } \P^3.$$ It can be computed that $\#C(\F_2) = 7, \#C(\F_4) =9, \#C(\F_8) = 13$, and $\#C(\F_{16}) = 9$, and using \eqref{eq:weil_conj}, one obtains $P_{C/\F_2} = t^8 + 4t^7 + 10t^6 + 20t^5 + 32t^4 + 40t^3 + 40t^2 + 32t + 16.$ It follows that $C$ is a supersingular curve.
\end{exmp}

In the following two examples, we consider an $\F_2$-isogeny class of abelian fourfolds and describe its elements.

\begin{exmp}
Consider the $\F_2$-isogeny class $[A]$ of abelian varieties of dimension $4$ defined by the characteristic polynomial  $$P_{A/\F_2} = t^8 + 2t^7 + 2t^6 - 4t^4 + 8t^2 + 16t + 16;$$ it is an irreducible (and therefore square-free) polynomial without real roots. Note that all abelian varieties inside $[A]$ are supersingular. Using \eqref{eq:base_ext_avs}, we find that the isogeny classes of the base extensions of abelian varieties in $[A]$ to $\F_4$ and $\F_8$ are defined by irreducible polynomials $P_{A/\F_4}$ and $P_{A/\F_8}$, while the $\F_{16}$-isogeny class of the base extension of $[A]$ to $\F_{16}$ is defined by $$P_{A/\F_{16}} = (t^4 - 4t^3 + 16t^2 - 64t + 256)^2 = (P_{S/\F_{16}})^2$$ with $P_{S/\F_{16}}$ irreducible. 

The only possibility that $\Jac_C$ is in $[A]$, with $C$ a stable, genus-$4$ curve of compact type defined over $\F_2$, is, in fact, when $C$ is a smooth curve. Indeed, if there are at least two irreducible components of $C$ defined over $\F_{2^n}$ with $n \in \{1, 2, 3\}$, the polynomial $P_{C/\F_{2^n}}$ would have at least two irreducible factors. Similarly, if $C$ has four components $C^{(i)}_{1, \F_{16}}$, $i = 1, 2, 3, 4$ of genus $1$, then $P_{C/\F_{16}} =  (P_{C^{(1)}_{1, \F_{16}}} )^4$. None of that is possible, so $C$ is necessarily smooth.

Using Xarles's data, we find only one smooth genus-$4$ curve over $\F_2$, whose Jacobian is in $[A]$. Namely, it is a hyperelliptic curve defined by the equation $$C: y^2 + y = x^9 + x^5, $$ for which we computed $|\Aut_{\F_2}(C)| = 4$. Lemma \ref{lem:curves_jacobians_autogp} gives that $[\Jac_C]$ is an $\F_2$-point of $\cA_4$ weighted by $\frac{1}{4}$ lying inside the isogeny class $[A]$. Therefore, we got $$\sum_{\substack{\Jac_C \in [\cJ_4(\F_2)] \\ \Jac_C \text{ in } [A]}} \frac{1}{|\Aut_{\F_2}(\Jac_C)|} = \frac{1}{4}.$$

\label{ex:isog_class_ss}
\end{exmp}

\begin{exmp}
Consider the same $\F_2$-isogeny class $[A]$ as in Example \ref{ex:isog_class_ss}, and write $h = P_{A/\F_2}$. The outcome of a Magma code gives that there is a CM-type $\Phi$ of endomorphism algebra $L = \Q[x]/h$, so that $(L, \Phi)$ satisfies the generalized residue reflex condition. By Remark \ref{rmk:reflex_cond}, it follows that $\Sigma_{\Phi}$ is non-empty. 

We find six overorders $T_i$ of $R_h$ inside $\O_L$, of index $i$ in $\O_L$ for $i = 1, 4, 8, 16, 32, 64$, and all satisfy condition \eqref{eqthm:552} of Proposition \ref{thm:552}. Hence, for all $i \in \{1, 4, 8, 16, 32, 64\}$ and for each fractional ideal $I$ with $(I:I) = T_i$, we can compute $|\mathcal{P}_{\Phi}^1(I)|$. That gives us a description of polarized abelian varieties $B$ in $\mathrm{AV}_{\F_2}(h)$ corresponding to fractional ideals $I$  under the antiequivalence of categories \eqref{eq:CS_equiv}; moreover, for each such $B$, we can compute the number of its (polarized) automorphisms over $\F_2$. 

Write $I_B = \mathcal{G}(B)$, for the image of  $B$ in $\mathrm{AV}_{\F_2}(h)$ under \eqref{eq:CS_equiv}, and let $(I_B:I_B) = T_i$. The obtained description of all $B\in \mathrm{AV}_{\F_2}(h)$ in terms of $i$ follows.
\begin{itemize}
    \item For $i = 1, 16, 32$, none of the abelian varieties $B$ in $\mathrm{AV}_{\F_2}(h)$ is principally polarizable.
    \item For $i = 4$, there is a unique principally polarized abelian variety $B$ in $\mathrm{AV}_{\F_2}(h)$ and $|\Aut_{\F_2}(B)| = 4$.
    \item For $i = 8$, there are two principally polarized abelian varieties $B$ in $\mathrm{AV}_{\F_2}(h)$, both with $|\Aut_{\F_2}(B)| = 4$.
    \item For $i = 64$, there is a single unpolarized abelian variety in $\mathrm{AV}_{\F_2}(h)$, and it possesses two non-isomorphic principal polarizations. In other words, here we get two non-isomorphic principally polarized varieties $B$. For both of them, it holds that $|\Aut_{\F_2}(B)| = 2$. 
\end{itemize}
 
In particular, the stack count we get here is $$\sum_{\substack{B \in [\mathcal{A}_4(\F_2)] \\ B \text{ in } [A]}} \frac{1}{|\Aut_{\F_2}(B)|} = \frac{1}{4} + 2\cdot \frac{1}{4} + 2\cdot \frac{1}{2} = \frac{7}{4}.$$

\label{ex:isog_class_ss_bkm}
\end{exmp}

\begin{exmp} We obtain the same numerical outcome in terms of the stack counts as in Example \ref{ex:isog_class_ss_bkm} and Example \ref{ex:isog_class_ss}, when we take $$h = P_{A/\F_2} = t^8 - 2t^7 + 2t^6 - 4t^4 + 8t^2 - 16t + 16.$$ In that case, we find only one stable, genus-$4$ curve $C$ of compact type defined over $\F_2$, for which $\Jac_C$ is inside the $\F_2$-isogeny class defined by $h$. Namely, we have that $C$ is the smooth hyperelliptic curve defined over $\F_2$ by the standard equation $$y^2 + y = x^9 + x^5 + 1.$$ It is a twist of the curve considered in Example  \ref{ex:isog_class_ss}.
\end{exmp}

Let now $\cA_4 = \cA_4 \otimes \Fbar_2$ and $\cJ_4 = \cJ_4 \otimes \Fbar_2$, and let $\cS_4$, $\mathcal{N}_{\leq 1/3}$, and $\mathcal{N}_{\leq 1/4}$ be the Newton polygon strata inside $\cA_4$ in characteristic two.
First, recall the result earlier obtained. 
\begin{lem}
\label{lem:ss4}
In characteristic two, the supersingular locus $\mathcal{S}_4$ is irreducible in  $\mathcal{A}_4$.
\end{lem}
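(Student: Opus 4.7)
The plan is to invoke the Li--Oort structure theorem for the supersingular locus from \cite{lioort}. There, $\cS_g$ over $\overline{\F}_p$ is realised as a union of irreducible components, each of dimension $\lfloor g^2/4 \rfloor$, obtained from the moduli of polarized flag type quotients (PFTQs) of the superspecial abelian variety $E^g$ equipped with its standard principal polarization. The irreducible components are parametrized by a certain class set attached to an order in the quaternion algebra $B_{p,\infty}$ and to the rank-$g$ lattice underlying the Dieudonn\'e data.

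For $g = 4$ and $p = 2$, I would first identify the precise class set that counts the components of $\cS_4$, following the formalism of \cite{lioort} (or the refinements in \cite{chaioort}). The dimension of each component is $\lfloor 16/4 \rfloor = 4$, matching the dimension stated in the introduction. Next, I would compute this class set using the Eichler--Ekedahl--Hashimoto--Ibukiyama mass formula for the superspecial locus $\Lambda_4$ in characteristic two, together with the explicit dimension and automorphism data of the PFTQ spaces. The irreducibility of $\cS_4$ then reduces to verifying that the relevant class number equals one.

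The main obstacle I expect is the honest computation at $p = 2$: the mass formula contains local correction factors depending on automorphism groups of superspecial polarized data, and the flag-type data at characteristic two requires care because of the small-characteristic behaviour of the group schemes $\alpha_2$ and the associated Dieudonn\'e submodule flags. Once this class number is shown to be one, irreducibility is automatic.

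As an alternative (which is essentially what the PFTQ picture encodes, but phrased more concretely), one could try to exhibit a single irreducible subvariety of $\cS_4$ of dimension $4$ and show that every supersingular principally polarized abelian fourfold is linked to the superspecial point $(E^4, \lambda_0)$ through a chain of $\alpha_2$-isogenies compatible with the polarization; irreducibility of the resulting flag variety would then imply irreducibility of $\cS_4$. Either route ultimately hinges on the same class-number-one computation.
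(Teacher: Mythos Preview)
Your strategy is the right one and is almost certainly what the cited reference does: by \cite{lioort}, Theorem~4.9, the irreducible components of $\cS_g$ over $\overline{\F}_p$ are indexed by the class number $H_g(p,1)$ of polarized superspecial abelian varieties (equivalently, a class number for quaternion hermitian lattices of rank $g$ over a maximal order in $B_{p,\infty}$), and irreducibility amounts to $H_4(2,1)=1$. The paper itself gives no argument here; its proof is a bare citation of \cite{ddmt}, Theorem~3.5, where the computation is carried out.

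That said, your proposal is a plan, not a proof. You correctly identify the crux---the class-number-one verification---and then leave it as ``the main obstacle.'' This is precisely the content of the lemma, so at present there is a genuine gap: nothing you have written distinguishes $p=2$ from any other prime, and for most primes $\cS_4$ is \emph{not} irreducible. To close the gap you must either compute $H_4(2,1)$ directly via the mass formula of Hashimoto--Ibukiyama (the mass for $g=4$, $p=2$ is small enough that a single isomorphism class with the right automorphism group accounts for all of it), or cite a source where this specific value is tabulated. The alternative route you sketch (linking everything to a single superspecial point through $\alpha_2$-chains) is not really independent: the PFTQ space over a fixed superspecial base point is irreducible, so the question of whether $\cS_4$ is irreducible is exactly the question of how many base points there are, i.e.\ the same class number.
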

\begin{proof}
See \cite{ddmt}, Theorem 3.5.
\end{proof}

Collecting the previous observations, we get the following result. 

{\begin{thm} In characteristic two, the supersingular locus $\cS_4\cap \cJ_4$ of Jacobians is pure of dimension three.
\label{thm:supersingular_curves_dim3}
\end{thm}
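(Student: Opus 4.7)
The plan is to squeeze $\dim(\cS_4\cap\cJ_4)$ from both sides. For the lower bound, since $\cA_4$ is smooth with $\codim_{\cA_4}\cS_4 = 10-4 = 6$ (by the Li--Oort formula) and $\codim_{\cA_4}\cJ_4 = 10-9 = 1$, the standard dimension inequality for intersections in a smooth ambient stack forces every irreducible component of $\cS_4\cap \cJ_4$ to have codimension at most $7$, hence dimension at least $3$.

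For the upper bound, Lemma \ref{lem:ss4} yields that $\cS_4$ is irreducible of dimension $4$. Any irreducible component $W$ of $\cS_4\cap \cJ_4$ is contained in $\cS_4$, so $\dim W \leq 4$, with equality forcing $W = \cS_4$ and hence $\cS_4 \subseteq \cJ_4$. It therefore suffices to rule out this geometric inclusion. Since $\cJ_4 \hookrightarrow \cA_4$ is a closed substack already defined over $\F_2$, faithfully flat descent shows that an inclusion $\cS_4 \otimes \Fbar_2 \subseteq \cJ_4\otimes \Fbar_2$ would descend to $\cS_4 \subseteq \cJ_4$ over $\F_2$, forcing in particular $\cS_4(\F_2) \subseteq \cJ_4(\F_2)$ and hence an equality of stack counts in every $\F_2$-isogeny class of supersingular principally polarized abelian fourfolds.

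I produce the contradiction in the $\F_2$-isogeny class $[A]$ of Examples \ref{ex:isog_class_ss} and \ref{ex:isog_class_ss_bkm}, defined by the irreducible polynomial $h = t^8 + 2t^7 + 2t^6 - 4t^4 + 8t^2 + 16t + 16$. On the ppav side, once the hypothesis \eqref{eqthm:552} of Proposition \ref{thm:552} is verified at each overorder of $R_h$ of index $1, 4, 8, 16, 32, 64$ in $\O_L$, the Bergstr\"om--Karemaker--Marseglia description through the antiequivalence \eqref{eq:CS_equiv} enumerates all principally polarized representatives in $[A]$ and yields total stack count $\tfrac{7}{4}$. On the Jacobian side, Xarles's list provides exactly one smooth $\F_2$-curve with Jacobian in $[A]$, a hyperelliptic curve $C$ with $|\Aut_{\F_2} C| = 4$; the irreducibility of $h$ over $\F_2$ (extended through the base-change identity \eqref{eq:base_ext_avs} to rule out the cases (vi)--(xi) at each intermediate extension) precludes any Jacobian of a singular stable compact-type $\F_2$-curve from lying in $[A]$, and the remark after Lemma \ref{lem:curves_jacobians_autogp} ensures that every $\F_2$-form of a geometric Jacobian in $[A]$ is itself a Jacobian of an $\F_2$-curve. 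Hence the $\cJ_4(\F_2)$ stack count in $[A]$ equals $\tfrac{1}{4}$; since $\tfrac{7}{4} \neq \tfrac{1}{4}$, the inclusion fails and every irreducible component of $\cS_4\cap\cJ_4$ has dimension exactly $3$.

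The main obstacle lies in the effective ppav-side enumeration: the algorithmic verification of \eqref{eqthm:552} at each overorder $T_i \supseteq R_h$ and the subsequent computation of $|\mathcal{P}^1_{\Phi}(I)|$ for each fractional ideal class, together with the relevant polarized automorphism groups, rely on the Bergstr\"om--Karemaker--Marseglia framework and a concrete computer-algebra computation, as carried out in Example \ref{ex:isog_class_ss_bkm}. Once that count is in hand, the remaining steps are purely formal.
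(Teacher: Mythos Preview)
Your proposal is correct and follows the same route as the paper: the lower bound comes from the codimension inequality in the smooth stack $\cA_4$, the upper bound via the irreducibility of $\cS_4$ (Lemma \ref{lem:ss4}) reduces the question to ruling out $\cS_4 \subseteq \cJ_4$, and the contradiction is drawn from the stack-count mismatch ($\tfrac14$ versus $\tfrac74$) in the isogeny class of Examples \ref{ex:isog_class_ss} and \ref{ex:isog_class_ss_bkm}. The only cosmetic difference is that the paper invokes Example \ref{ex:smooth_ss_curve} explicitly for non-emptiness, whereas you obtain it implicitly from the hyperelliptic curve appearing in Example \ref{ex:isog_class_ss}.
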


\begin{proof}
Let $\mathcal{W}$ be an irreducible component of $\cS_4\cap \cJ_4$. Recall that $\cS_4 = \cS_4\otimes \Fbar_2$ is irreducible by Lemma \ref{lem:ss4}, proper, and has dimension $\lfloor 4^2/4 \rfloor = 4$, and that $\cJ_4$ is closed of codimension $1$ in $\cA_4$. We find by Example \ref{ex:smooth_ss_curve} that  $\cS_4\cap \cJ_4$ is non-empty, and thus, the dimension of each component of $\cS_4\cap \cJ_4$ is either $3$ or $4$. 

Assume that $\dim \mathcal{W} = 4$. Then, $\mathcal{W}$ coincides with $\mathcal{S}_4$ and is contained in the Torelli locus $\mathcal{J}_4$. However, by Examples \ref{ex:isog_class_ss} and \ref{ex:isog_class_ss_bkm}, that is not possible. Namely, the inclusion of $\mathcal{S}_4$ in $\mathcal{J}_4$ would result in the inclusion of their $\F_2$-points; that would imply that we should get the same stack counts in those examples. Therefore, $\dim \mathcal{W} = 3$.
\end{proof}}

\begin{rem}
There is a recent result by Harashita describing certain irreducible components $\mathcal{W}$ of $(\cJ_4\cap \cS_4)\otimes \Fbar_p$. Namely, if $\mathcal{W}$ contains an image of a non-hyperelliptic superspecial (smooth) curve, then by \cite{harashita21}, Theorem 2.6, $\mathcal{W}$ is three-dimensional. Note that we cannot use it for $p = 2$ since there are no superspecial genus-$g$ curves in characteristic two for $g\geq 2$.  
\end{rem}

\begin{cor} In characteristic two, the Newton polygon strata $\mathcal{N}_{\leq 1/3}\cap \cJ_4$ and $\mathcal{N}_{\leq 1/4}\cap \cJ_4$  are of the (expected) codimensions $4$ and $5$ inside $\cJ_4$.
\label{cor:np_conclusion}
\end{cor}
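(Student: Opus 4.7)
The plan is to combine Theorem~\ref{thm:supersingular_curves_dim3} (which, together with $\dim\cS_4=\lfloor 4^2/4\rfloor=4$, implies $\cS_4\not\subseteq \cJ_4$) with the purity theorem of de~Jong--Oort for Newton polygon stratifications, applied in the smooth ambient $\cA_4$. The argument then reduces to routine dimension bookkeeping.

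First, I would use purity in $\cA_4$ to pin down the ambient dimensions. Starting from $\dim\cS_4=4$, iterated purity applied along $\cS_4\subseteq \mathcal{N}_{\leq 1/3}\subseteq \mathcal{N}_{\leq 1/4}$ drops codimension by one at each step, yielding $\dim \mathcal{N}_{\leq 1/3}=5$ and $\dim \mathcal{N}_{\leq 1/4}=6$ in $\cA_4$. The key refinement of this step, obtained by iterating purity together with the density of the generic open stratum inside its closure, is that every top-dimensional irreducible component of $\mathcal{N}_{\leq 1/3}$ (and similarly of $\mathcal{N}_{\leq 1/4}$) meets $\cS_4$ in a closed subset of pure dimension $4$; by the irreducibility of $\cS_4$ (Lemma~\ref{lem:ss4}), this subset must then be all of $\cS_4$, so every such top-dimensional component contains $\cS_4$ in its entirety.

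Second, lower dimension bounds on the intersections follow from the smoothness of $\cA_4$: since $\dim \cA_4=10$, $\cJ_4$ has codimension $1$, and the ambient Newton polygon strata have codimensions $5$ and $4$, the excess-intersection inequality recalled in the introduction forces every irreducible component of $\mathcal{N}_{\leq 1/3}\cap \cJ_4$ to have dimension at least $10-(5+1)=4$, and every irreducible component of $\mathcal{N}_{\leq 1/4}\cap \cJ_4$ to have dimension at least $10-(4+1)=5$.

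Third, the matching upper bounds come by contradiction. If $\mathcal{N}_{\leq 1/4}\cap \cJ_4$ had an irreducible component of dimension $6$, this component would be a top-dimensional irreducible component of $\mathcal{N}_{\leq 1/4}$ entirely contained in $\cJ_4$; by the refinement in the first step it would then contain $\cS_4$, contradicting $\cS_4\not\subseteq \cJ_4$. The same argument, applied to top-dimensional components of $\mathcal{N}_{\leq 1/3}$, rules out a $5$-dimensional component of $\mathcal{N}_{\leq 1/3}\cap \cJ_4$. Combining the two bounds yields the stated (expected) codimensions in $\cJ_4$. The main technical point is the refinement in the first step: one needs that every top-dimensional component of each closed Newton polygon stratum inside $\cA_4$ contains the full supersingular locus $\cS_4$, which is where the irreducibility of $\cS_4$ interacts crucially with purity.
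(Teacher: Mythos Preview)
Your strategy—derive the upper bounds from $\cS_4\not\subseteq\cJ_4$ via the inclusions $\cS_4\subseteq\mathcal{N}_{\leq 1/3}\subseteq\mathcal{N}_{\leq 1/4}$—is close to the paper's for $\mathcal{N}_{\leq 1/3}$, but the paper treats $\mathcal{N}_{\leq 1/4}$ differently: since $\mathcal{N}_{\leq 1/4}$ coincides with the $2$-rank~$0$ locus $V_0$, it simply invokes \cite{fabervdgeer}, Theorem~2.3, which gives the expected codimension of $V_0\cap\cJ_g$ directly. For $\mathcal{N}_{\leq 1/3}$ the paper exhibits an explicit smooth curve in $\mathcal{N}_{1/3}$ and then argues from $\cS_4\subseteq\mathcal{N}_{\leq 1/3}$ and $\cS_4\not\subseteq\cJ_4$; so the overall logic there is the same as yours.

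There is, however, a genuine gap in your ``refinement in the first step.'' De~Jong--Oort purity only says that on a top-dimensional irreducible component $W$ of $\mathcal{N}_{\leq 1/3}$ the jump locus $W\cap\cS_4$ has pure codimension~$1$ \emph{provided it is nonempty}; purity alone does not force $W\cap\cS_4\neq\varnothing$, so it cannot by itself guarantee that $W$ contains $\cS_4$. Your phrase ``density of the generic open stratum inside its closure'' is really the nontrivial assertion $\overline{\mathcal{N}_{1/3}}=\mathcal{N}_{\leq 1/3}$, i.e.\ $\cS_4\subseteq\overline{\mathcal{N}_{1/3}}$, which is Oort's theorem on the Grothendieck specialization conjecture and not a consequence of purity. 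The cleanest fix is to cite either that result or the Chai--Oort irreducibility of non-supersingular open Newton strata \cite{chaioort}: combined with Lemma~\ref{lem:ss4} and the dimension count, these make $\mathcal{N}_{\leq 1/3}$ and $\mathcal{N}_{\leq 1/4}$ irreducible, after which a component of the intersection of maximal possible dimension would force the whole stratum, hence $\cS_4$, into $\cJ_4$. The paper's one-line argument for $\mathcal{N}_{\leq 1/3}$ tacitly relies on the same irreducibility input.
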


\begin{proof}
For $\mathcal{N}_{\leq 1/4}\cap \cJ_4$, the result follows from \cite{fabervdgeer}, Theorem 2.3. 

For the second claim, consider for example, the curve $C$ given by the equations $$C:\left\{\begin{matrix}
XY + ZT = 0\\ 
T^2X + TX^2 + Y^3 + X^2Z + XZ^2 = 0
\end{matrix}\right. \text{ in } \P^3.  $$ It lies in $\mathcal{N}_{1/3}$ since $\#C(\F_2) = 5, \#C(\F_4) = 9, \#C(\F_8) = 11$, and $\#C(\F_{16}) = 17$. Furthermore,  $\cS_4\subseteq \mathcal{N}_{\leq 1/3}$, and thus, $\mathcal{N}_{\leq 1/3}\cap \cJ_4$ is not fully contained in the Torelli locus, so the result for $\mathcal{N}_{\leq 1/3}\cap \cJ_4$ follows. 
\end{proof}

\newpage
\label{sec:Oort}
\section{Ekedahl-Oort stratification for $g = 4$} 
In this section, we mention two classification results about the Ekedahl-Oort stratification of $\cA_4 = \cA_4 \otimes \Fbar_p$. Using them, we will connect certain Ekedahl-Oort types and Newton polygons and describe the stable, genus-$4$ curves of compact type lying inside the loci determined by those invariants.

Ekedahl-Oort strata in $\cA_g$ are defined by the classification of the $p$-torsion group schemes $A[p]$ of abelian varieties $A$ up to isomorphism (of $\mathrm{BT}_1$-group schemes), while the Newton polygon strata are defined by the classification of the $p$-divisible groups $A[p^{\infty}]$ of abelian varieties $A$ up to isogeny (of $p$-divisible groups). Recall the well-known facts about these stratifications.

\begin{itemize}
    \item The Ekedahl-Oort strata in $\cA_4$ of $p$-rank zero are precisely $Z_{\mu}$ for those $\mu$ appearing in the diagram below,  
\begin{small}
\begin{center}
\begin{tikzcd}
	&&& {[4, 2, 1]} \\
	{[4, 3, 2, 1]} & {[4, 3, 2]} & {[4, 3, 1]} && {[4, 2]} & {[4, 1]} & {[4]}, \\
	&&& {[4, 3]}
	\arrow[from=2-1, to=2-2]
	\arrow[from=2-2, to=2-3]
	\arrow[from=2-3, to=1-4]
	\arrow[from=2-3, to=3-4]
	\arrow[from=3-4, to=2-5]
	\arrow[from=1-4, to=2-5]
	\arrow[from=2-5, to=2-6]
	\arrow[from=2-6, to=2-7]
\end{tikzcd}  
\end{center}
\end{small}
where there is an arrow $\mu_1 \to \mu_2$ if $Z_{\mu_1} \subseteq \overline{Z_{\mu_2}}$. For $\mu = [\mu_1, \mu_2, \ldots, \mu_n]$, the codimension of $Z_{\mu}$ in $\cA_4$ equals $\sum_{i = 1}^n\mu_i$.
\item The Newton polygon strata inside the $p$-rank zero locus are the ones defined by the slope sequences $$\left (\frac{1}{2}, \frac{1}{2}, \frac{1}{2}, \frac{1}{2}, \frac{1}{2}, \frac{1}{2}, \frac{1}{2}, \frac{1}{2}\right ), \left (\frac{1}{3}, \frac{1}{3}, \frac{1}{3}, \frac{1}{2}, \frac{1}{2}, \frac{2}{3}, \frac{2}{3}, \frac{2}{3}\right ), \text{ and } \left (\frac{1}{4}, \frac{1}{4}, \frac{1}{4}, \frac{1}{4}, \frac{3}{4}, \frac{3}{4}, \frac{3}{4}, \frac{3}{4}\right ).$$ We denote these strata respectively by $\mathcal{S}_{4}$, $\mathcal{N}_{1/3}$, and $\mathcal{N}_{1/4}$.
\end{itemize}

In \cite{ibukiyamakaremakeryu}, Ibukiyama, Karemaker, and Yu obtain the following result, which tells us how the mentioned Ekedahl-Oort and Newton polygon loci are related.

\begin{prop} Let $p$ be a prime number and $\cA_4 = \cA_4 \otimes \Fbar_p$ the moduli space of principally polarized abelian varieties in characteristic $p$. The following results hold. 

\begin{enumerate}
    \item The strata $Z_{\mu}$, for $\mu \in \{[4, 3], [4, 3, 1], [4, 3, 2], [4, 3, 2, 1]\}$, are completely contained in $\cS_4$, and we have 
    \begin{equation*}
    \cS_4 = (Z_{[4]}\cap \cS_4) \sqcup (Z_{[4, 2]}\cap \cS_4) \sqcup Z_{[4, 3]}\sqcup Z_{[4, 3, 1]}\sqcup Z_{[4, 3, 2]}\sqcup Z_{[4, 3, 2, 1]}; 
    \end{equation*} $Z_{[4]}\cap \cS_4$ is dense in $\cS_4$.

    \item The stratum $Z_{[4, 2, 1]}$ is completely contained in $\mathcal{N}_{1/3}$, and we have $$\mathcal{N}_{1/3} = (Z_{[4]}\cap \mathcal{N}_{1/3} )\sqcup (Z_{[4, 2]}\cap \mathcal{N}_{1/3} ) \sqcup Z_{[4, 2, 1]}; $$  $(Z_{[4]}\cap \mathcal{N}_{1/3} )$ is dense in $ \mathcal{N}_{1/3}$.
    
    \item The stratum $Z_{[4, 1]}$ is completely contained in $\mathcal{N}_{1/4}$, and we have $$\mathcal{N}_{1/4} = (Z_{[4]}\cap \mathcal{N}_{1/4} )\sqcup Z_{[4, 1]}; $$  $(Z_{[4]}\cap \mathcal{N}_{1/4} )$ is dense in $ \mathcal{N}_{1/4}$.
\end{enumerate}
\label{prop:first_eo_prop}
\end{prop}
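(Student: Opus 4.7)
The plan is to combine Dieudonn\'e-module calculations with dimension counting and irreducibility. For each Young type $\mu$ with $\mu_1 = 4$ appearing in the Hasse diagram, I would first write down the unique $\mathrm{BT}_1$ group scheme $G_{\mu}$ with the prescribed final type, and then enumerate, up to isogeny, all $p$-divisible groups $G$ with $G[p]\cong G_{\mu}$. Concretely, I would lift the semilinear pair $(F, V)$ on the truncated Dieudonn\'e module to the full (covariant) Dieudonn\'e module and read the Newton slopes off the characteristic polynomial of Frobenius on the rational Dieudonn\'e module. Any admissible lift contributes to the Newton polygon stratum determined by those slopes.

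I would handle the ``extremal'' cases first. For $\mu\in \{[4,3],[4,3,1],[4,3,2],[4,3,2,1]\}$ the final types are rigid enough that the only admissible lift has all slopes equal to $1/2$, giving $Z_{\mu} \subseteq \cS_4$. Similarly, $\mu = [4,2,1]$ forces the $(1/3,1/2,2/3)$-polygon, giving $Z_{[4,2,1]}\subseteq \mathcal{N}_{1/3}$, and $\mu = [4,1]$ forces the $(1/4,3/4)$-polygon, giving $Z_{[4,1]}\subseteq \mathcal{N}_{1/4}$. These are the statements one expects from Kraft's classification of $\mathrm{BT}_1$ group schemes together with the constraint that Newton and Hodge polygons of compatible lifts must interlace.

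Next I would assemble the three decompositions. Since $\cA_4$ is the set-theoretic disjoint union of its Ekedahl-Oort strata, intersecting with each of $\cS_4$, $\mathcal{N}_{1/3}$, $\mathcal{N}_{1/4}$ yields the claimed decompositions, once I verify that the ``mixed'' types $\mu = [4]$ and $\mu = [4,2]$ actually contribute non-empty pieces to each of the relevant Newton polygon strata. For that, it suffices to exhibit, for each relevant Newton polygon, an explicit Dieudonn\'e module whose $p$-torsion is isomorphic to $G_{[4]}$ or $G_{[4,2]}$ and whose Newton polygon matches; these are standard constructions.

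Finally, for the density assertions I would use dimension counts together with irreducibility. The codimension of $Z_{[4]}$ in $\cA_4$ equals $4 = \dim \cA_4 - \dim \cS_4$, so $Z_{[4]}\cap \cS_4$ is open in $\cS_4$, and combined with the irreducibility of $\cS_4$ (Lemma \ref{lem:ss4} and its analogues in other characteristics) it is dense; density of $Z_{[4]}\cap \mathcal{N}_{1/3}$ in $\mathcal{N}_{1/3}$ and of $Z_{[4]}\cap \mathcal{N}_{1/4}$ in $\mathcal{N}_{1/4}$ follows by the same pattern, using the known dimensions of these Newton polygon strata. The main technical obstacle is the Dieudonn\'e-module bookkeeping in the first step: rigorously ruling out all lower Newton polygons for the six ``extremal'' types requires a careful case-by-case analysis of the $(F,V)$-action on the admissible lifts, which is essentially what \cite{ibukiyamakaremakeryu} carries out in detail.
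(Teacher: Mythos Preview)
The paper does not prove this proposition; it simply cites \cite{ibukiyamakaremakeryu}, Proposition~5.13. Your outline of the Dieudonn\'e-module analysis for the containment statements and the resulting decompositions is a reasonable sketch of what that reference carries out, and you acknowledge as much.

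Your density argument, however, has a genuine gap. First, the numerical claim ``the codimension of $Z_{[4]}$ in $\cA_4$ equals $4 = \dim \cA_4 - \dim \cS_4$'' is simply wrong: $\dim \cA_4 - \dim \cS_4 = 10 - 4 = 6$, and in any case a codimension coincidence would not by itself make $Z_{[4]}\cap \cS_4$ open in $\cS_4$. The correct openness statement is that $Z_{[4]}$ is open in the $p$-rank zero locus $V_0$ (being the complement of the closed condition $a\geq 2$), whence $Z_{[4]}\cap \cS_4$ is open in $\cS_4$. Second, and more seriously, you invoke irreducibility of $\cS_4$ via Lemma~\ref{lem:ss4} ``and its analogues in other characteristics'', but no such analogue exists: Lemma~\ref{lem:ss4} is specific to $p = 2$, and for general $p$ the supersingular locus $\cS_g$ is typically reducible, with the number of components governed by a class number (cf.\ \cite{lioort}). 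So ``open plus irreducible'' does not give density of $Z_{[4]}\cap \cS_4$ in $\cS_4$ for arbitrary $p$; one needs the separate fact, from the Li--Oort description of $\cS_g$ or from Oort's theory of minimal $p$-divisible groups, that \emph{every} irreducible component of $\cS_4$ has generic $a$-number~$1$. For $\mathcal{N}_{1/3}$ and $\mathcal{N}_{1/4}$ your strategy is salvageable, since non-supersingular Newton strata are irreducible by \cite{chaioort}, but you should cite that rather than a nonexistent analogue of Lemma~\ref{lem:ss4}.
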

\begin{proof}
See \cite{ibukiyamakaremakeryu}, Proposition 5.13.
\end{proof}

\begin{rem}
By Proposition \ref{prop:first_eo_prop}, we have $Z_{[4, 3]}\subseteq \cS_4$, $Z_{[4, 2]}\subseteq \cS_4 \cup \mathcal{N}_{1/3}$, and $Z_{[4, 1]}\subseteq \mathcal{N}_{1/4}$. Therefore, for an abelian variety $A$, we have

\begin{equation*} a(A) = 2 \implies
\begin{cases}
A \in Z_{[4, 1]} & \text{ if } A \in \mathcal{N}_{1/4} \\ 
A \in Z_{[4, 2]} & \text{ if } A \in \mathcal{N}_{1/3} \\
A \in Z_{[4, 2]}\cup Z_{[4, 3]} & \text{ if } A \in \mathcal{S}_{4}
\end{cases}.
\end{equation*}
\label{rem:a_num_2_eo_np}
\end{rem}

Let $p = 2$. Using the characterization of the space of regular differentials in terms of theta characteristics by Stöhr and Voloch in \cite{stohrvoloch}, Proposition 3.1, for non-hyperelliptic, genus-$4$ smooth curves $C$, which lie on a quadric cone, we find that
\begin{equation}
\tworank(C) = 0 \implies  a(C) = 2.
\label{eq:anumberofcone}    
\end{equation}
Let us now combine that result with the previous remark.

\begin{exmp} As an immediate application of the previous results, we find a smooth genus-$4$ curve over $\F_2$ with the Ekedahl-Oort type $[4, 1]$. Consider the curve 
$$C:\left\{\begin{matrix}
XY + T^2 = 0\\ 
TX^2 + Y^3 + X^2Z + Z^3 = 0
\end{matrix}\right. \text{ in } \P^3.$$ Using Xarles's data, we find $$\#C(\F_2) = 5, \#C(\F_4) = 9, \#C(\F_8) = 11, \text{ and } \#C(\F_{16}) = 25, $$ and thus, we see that $\Jac_C \in \mathcal{N}_{1/4}$, so, in particular, $\tworank(C) = 0$. Furthermore, $C$ is a smooth curve lying on a quadric cone, so we conclude by the previous $a(\Jac_C) = a(C) = 2$. Now Remark \ref{rem:a_num_2_eo_np} gives us that $$\Jac_C \in Z_{[4, 1]}.$$
\label{exmp:EO41nonempty}
\end{exmp}

\begin{exmp} Xarles's data from \cite{xarles} say there are no smooth genus-$4$ curves $C$ over $\F_2$ lying on a quadric cone, such that $\Jac_C$ is in $\mathcal{N}_{1/3}\cup \cS_4$. 
\end{exmp}

Now we offer a characterization of the Ekedahl-Oort types in terms of indecomposable $\mathrm{BT}_1$ group schemes. Recall the construction from \cite{oort_eo_classification} by Oort. 

Let $$\nu = \{\nu(1),\nu(2), \ldots, \nu(g) \}, \text{ }\text{ with }\text{ } \nu(2g - i) = \nu(i) + g - i,\text{ } 1\leq i \leq g, \text{ }\text{and } \nu(0) = 0$$ be the final type of a principally polarized abelian variety $A$ of dimension $g$ over $k$, let $$1\leq m_1 < m_2 < \ldots < m_g\leq 2g$$ be those $i$ with $\nu(i - 1)< \nu(i)$, and $$1\leq n_g < n_{g - 1} < \ldots < n_1\leq 2g$$ be the elements in the complement $\{1, 2, \ldots, 2g\} - \{m_1, m_2, \ldots, m_g\}$. By \cite{oort_eo_classification}, Theorem 9.4, the polarized covariant Dieudonn\'e module $\mathbb{D}(A[p])$ is isomorphic to the one with the basis $$\{Z_{1}, Z_{2}, \ldots, Z_{2g}\}, 
\text{  where  } Z_{m_i} = X_i \text{ and } Z_{n_i} = Y_i, \text{ for } i \in \{1, 2, \ldots g\}$$ and the relations $$\mathcal{F}(Z_{m_i}) = Z_{i}, \quad \mathcal{F}(Z_{n_i}) =  0, \quad \mathcal{V}(Z_{i}) = 0, \text{  and   } \mathcal{V}(Z_{2g - i + 1}) = \begin{cases}
Z_{n_i} & \text{if } 2g - i + 1 \in \{n_1, \ldots, n_g\} \\ 
-Z_{n_i} & \text{otherwise }
\end{cases},$$ for any $i \in \{1, 2, \ldots, g\}$. The quasi-polarization is given by $$\<X_i, Y_j\> = \delta_{ij}, \quad \<X_i, X_j\> = 0, \quad \text{and}\quad \<Y_i, Y_j\> = 0.$$

In the following examples, we will describe some $\mathrm{BT}_1$ group schemes $A[p]$ of abelian varieties $A$ inside the $\prank$ zero loci. 

\begin{lem}
For any $g\geq 1$, there is a unique, indecomposable $\mathrm{BT}_1$ group scheme $I_{g, 1}$ of rank $p^{2g}$, $p$-rank $0$, and $a$-number $1$. In other words, for an abelian variety $A$ with $A[p]\cong I_{g, 1}$, we have $\mu_A = [g]$, i.e., $\nu_A = \{0, 1, 2, \ldots, g-1\}$.
\end{lem}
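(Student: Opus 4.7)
The plan is to read the existence and uniqueness parts off the Oort classification recalled immediately before the statement, and to verify indecomposability by a short additivity argument for the $a$-number. For existence and uniqueness, the two formulas $\prank(A) = g - \mu_1$ and $a(A) = n$ (the number of parts of $\mu_A$) recalled in the introduction together with the hypotheses $\prank = 0$ and $a$-number $1$ pin the Young type down to $\mu = [g]$, and correspondingly the final type to $\nu = \{0, 1, \ldots, g-1\}$. Then by \cite{oort_eo_classification}, Theorem 9.4, there is a unique (up to isomorphism) principally quasi-polarized $\mathrm{BT}_1$ group scheme with this final type, and a concrete model is furnished by the Dieudonn\'e module with basis $\{Z_1, \ldots, Z_{2g}\}$ and the $\mathcal{F}, \mathcal{V}$ relations recalled just above; call it $I_{g,1}$.

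To prove indecomposability, I would argue by contradiction: suppose $I_{g,1} \cong G_1 \oplus G_2$ as $\mathrm{BT}_1$ group schemes with both $G_i$ of positive rank. Then $\mathbb{D}(I_{g,1}) = M_1 \oplus M_2$, and both $\mathcal{F}$ and $\mathcal{V}$ respect the decomposition. Since $\prank$ is additive and vanishes on $I_{g,1}$, each $G_i$ is local-local, so $\mathcal{F}, \mathcal{V}$ are nilpotent on $M_i$. The $\mathrm{BT}_1$ relations $\ker \mathcal{F} = \mathrm{Im}\,\mathcal{V}$ and $\ker \mathcal{V} = \mathrm{Im}\,\mathcal{F}$ combined with nilpotency force both $\mathcal{F}$ and $\mathcal{V}$ to be nonzero on each $M_i \neq 0$. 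Choosing $m \in M_i$ and the largest $k \geq 1$ with $\mathcal{F}^k m \neq 0$, the element $\mathcal{F}^k m$ lies in $\ker \mathcal{F} \cap \mathrm{Im}\,\mathcal{F} \subseteq \ker \mathcal{F} \cap \ker \mathcal{V}$ (the last inclusion since $\mathcal{V}\mathcal{F} = p = 0$ on $p$-torsion). Hence $a(G_i) \geq 1$ for $i=1,2$; additivity then gives $a(I_{g,1}) \geq 2$, contradicting $a(I_{g,1}) = 1$.

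The only step requiring any real thought is the positivity of the $a$-number for a non-trivial local-local $\mathrm{BT}_1$; once that is in hand, the rest is a direct translation between the numerical invariants $(\prank, a)$ and the Oort classification. Notably, the indecomposability argument does not need to interact with the principal quasi-polarization, since the $a$-number depends only on the underlying group scheme, so any $\mathrm{BT}_1$-level splitting already produces the desired contradiction.
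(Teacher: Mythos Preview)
Your proposal is correct and follows essentially the same approach as the paper. The paper's proof is terser: it cites \cite{pries_eo_classification}, Lemma 3.1 for the fact that nilpotent $\mathcal{V}$ together with $\mu_{A,2}=0$ pins down the type, and for indecomposability simply says ``$I_{g,1}$ is necessarily indecomposable as $\prank$ and $a$-number are additive functions,'' leaving implicit the point you spell out carefully, namely that any nontrivial local-local $\mathrm{BT}_1$ summand has $a$-number at least $1$.
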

\begin{proof}
 The proof follows from the fact that $\mathcal{V}$ is nilpotent on $I_{g, 1}$ and that $\mu_{A, 2} = 0$ uniquely determines $I_{g, 1}$; see \cite{pries_eo_classification}, Lemma 3.1. 
 The $\mathrm{BT}_1$ group scheme $I_{g, 1}$ is necessarily indecomposable as $\prank$ and $a$-number are additive functions. 

Let us briefly describe the basis $\{Z_1, \ldots, Z_{2g}\}$ of $\mathbb{D}(I_{g, 1})$ and how it behaves under $\mathcal{F}$ and $\mathcal{V}$. We have $$Z_{2} = X_{1}, \ldots,Z_{g-1} = X_g, Z_{2g} = X_{g}, \quad \text{and}\quad Z_{1} = Y_g, Z_{g + 1} = Y_{g - 1}, \ldots Z_{2g - 1} = Y_1, $$ with 
$$\mathcal{F}(X_i) = \begin{cases}
X_{i - 1} & \text{ if } 2\leq i \leq g \\ 
Y_g & \text{ if } i = 1
\end{cases},\quad \text{and}\quad \mathcal{F}(Y_i) = 0 \text{ for }i\in \{1, \ldots, g\},$$ and $$\mathcal{V}(Y_i) = \begin{cases}
Y_{i + 1} & \text{ if } 1\leq i \leq g-1 \\ 
0 & \text{ if } i = g
\end{cases},\quad\text{and}\quad \mathcal{V}(X_i) = \begin{cases}
0 & \text{ if } 1\leq i \leq g-1 \\ 
-Y_1 & \text{ if } i = g
\end{cases}.$$
\label{lem:Ig1}
\end{proof}

\begin{exmp}[Dimension 1]
In dimension $1$, the notions of being inside the $\prank$ zero locus, being supersingular, superspecial, and having the Ekedahl-Oort type $[1]$ coincide. By Lemma \ref{lem:Ig1}, an abelian variety $A$ having any of those properties satisfies $A[p]\cong I_{1, 1}$. 
\end{exmp}

\begin{exmp}[Dimension 2]
There are two Ekedahl-Oort types of abelian surfaces $A$ inside the $\prank$ zero locus. It immediately follows that 
\begin{itemize}
    \item either $\mu_A = [2]$, when $A[p]\cong I_{2, 1}$ by Lemma \ref{lem:Ig1};
    \item or $\mu_A = [2, 1]$, when $A$ is superspecial and $A[p]\cong I_{1, 1}^{\oplus 2}$.
\end{itemize}
\end{exmp}

\begin{exmp}[Dimension 3]
The Ekedahl-Oort types occurring for abelian threefolds $A$ inside the $\prank$ zero locus are $[3], [3, 1], [3, 2]$, and $[3, 2, 1]$. 
\begin{itemize}
    \item If $\mu_A = [3]$, then $A[p] \cong I_{3, 1}$ as explained in Lemma \ref{lem:Ig1}.
    \item When $\mu_A = [3, 1]$, then $A[p]\cong I_{3,2}$ is an indecomposable $\mathrm{BT}_1$ group scheme; see Example \ref{exmp:dim4_eo_anum3} for the indecomposability argument. 
    
    Its covariant Dieudonn\'e module $D_6 = \mathbb{D}(I_{3, 2})$ has a basis $\{Y_3, X_1, Y_2, X_2, Y_1, X_3\}$, and $$\mathcal{F}: \bigl(\begin{smallmatrix}
Y_3& X_1& Y_2& X_2& Y_1& X_3 \\ 
0 & Y_3 &0  & X_1  &0  & Y_2 
\end{smallmatrix}\bigr) \text{ and } \mathcal{V}: \bigl(\begin{smallmatrix}
Y_3& X_1& Y_2& X_2& Y_1& X_3 \\ 
0 & 0 &0  & -Y_3  &Y_2  & -Y_1 
\end{smallmatrix}\bigr).$$ Therefore, we get $\mathcal{V}(D_6) = \<Y_3, Y_2, Y_1\> = D_3$, and $\mathcal{V}^2(D_6) = \<Y_2\> = D_1$. Then $\mathcal{F}^{-1}(D_1) = \<Y_3, Y_2, Y_1, X_3\> = D_4$, so $\mathcal{V}(D_4) = \<Y_2, Y_1\> = D_2$ and $\mathcal{V}(D_2) = \<Y_2\> = D_1$. From this, we clearly see that the final type of $A$ is $\nu_A = \{0, 1, 1\}$.

    \item If $\mu_A = [3, 2]$, then $A[p]\cong I_{2, 1}\oplus I_{1, 1}$. To see this, use Lemma \ref{lem:Ig1} to compute the Ekedahl-Oort type of $I_{2, 1}\oplus I_{1, 1}$.
    \item Lastly, if $\mu_A = [3, 2, 1]$, then $A$ is superspecial and $A[p]\cong I_{1, 1}^{\oplus 3}$.
\end{itemize}
\label{exmp:dim3_eo_anum2}
\end{exmp}

Lastly, we classify the Ekedahl-Oort types in dimension four, of those abelian varieties having the $a$-number three and lying inside the $\prank$ zero locus.

\begin{exmp}
There are three Ekedahl-Oort types of abelian fourfolds with $\prank$ zero and $a$-number three. Namely, $\mu$ can be $$[4, 2, 1], [4, 3, 1], \text{ or }[4, 3, 2].$$ Using the additivity of the $\prank$ and the $a$-number, we see that two of them define decomposable $\mathrm{BT}_1$ group schemes, and consequently, the remaining one defines an indecomposable one. 

Consider first $A[p]\cong I_{3, 2}\oplus I_{1, 1}$. Lemma \ref{lem:Ig1} and Example \ref{exmp:dim3_eo_anum2} give us that $$D_8 = \mathbb{D}(I_{3, 2}\oplus I_{1, 1}) = \mathbb{D}(I_{3, 2})\oplus \mathbb{D}(I_{1, 1}) = \<Y_3, X_1, Y_2, X_2, Y_1, X_3\>\oplus\<Y_1', X_1'\>.$$ Hence, $\mathcal{V}(D_8) = \<Y_3, Y_2, Y_1\>\oplus\<Y_1'\> = D_4$, and $\mathcal{V}^2(D_8) = \<Y_2\>\oplus\<0\> = D_1$. Next, since $\mathcal{F}^{-1}(D_1) = \<Y_3, Y_2, Y_1, X_3\>\oplus \<Y_1'\> = D_5$ we obtain that $\mathcal{V}(D_5) = \<Y_2, Y_1\>\oplus\<0\> = D_2$ and $\mathcal{V}(D_2) = \<Y_2\>\oplus \<0\> = D_1$. It follows that the final type of $A$ is $\nu_A = \{0, 1, 1, 1\}$ so $$\mu_A = [4, 2, 1].$$

For $A[p] \cong I_{2,1}\oplus (I_{1, 1})^{\oplus 2}$ we can do the similar computations using Lemma \ref{lem:Ig1} and Example \ref{exmp:dim3_eo_anum2} to get $$\mu_A = [4, 3, 2].$$

Therefore, we conclude that the Ekedahl-Oort type $[4, 3, 1]$ corresponds to the indecomposable $\mathrm{BT}_1$ group scheme we denote $I_{4, 3}$.

\label{exmp:dim4_eo_anum3}
\end{exmp}

Combining the observations from the previous examples gives us the following Proposition.

\begin{prop} Let $A$ be a principally polarized abelian fourfold over $k$. Assume that $A$ lies in the $\prank$ zero locus of $\cA_4$, and let $\mu_A$ be the Young type associated with the Ekedahl-Oort type of $A$. Then, there are the following possibilities for the $p$-torsion group scheme $A[p]$ of $A$:
\begin{itemize}
    \item $A[p]\cong I_{4, 1}$, and then $\mu_A = [4]$ and $a(A) = 1$;
    \item $A[p]\cong I_{4, 2}$, and then $\mu_A = [4, 1]$ and $a(A) = 2$;
    \item $A[p]\cong I_{3, 1}\oplus I_{1, 1}$, and then $\mu_A = [4, 2]$ and $a(A) = 2$;
    \item $A[p]\cong I_{2, 1}\oplus I_{2, 1}$, and then $\mu_A = [4, 3]$ and $a(A) = 2$;
    \item $A[p]\cong I_{3, 2}\oplus I_{1, 1}$, and then $\mu_A = [4, 2, 1]$ and $a(A) = 3$;
    \item $A[p]\cong I_{4, 3}$, and then $\mu_A = [4, 3, 1]$ and $a(A) = 3$;
    \item $A[p]\cong I_{2, 1}\oplus (I_{1, 1})^{\oplus 2}$, and then $\mu_A = [4, 3, 2]$ and $a(A) = 3$;
    \item $A[p]\cong (I_{1, 1})^{\oplus 4}$, and then $\mu_A = [4, 3, 2, 1]$ and $a(A) = 4$;
\end{itemize}
where $I_{r, a}$ is the notation for a unique indecomposable symmetric $\mathrm{BT}_1$ group scheme of rank $p^{2r}$ of $\prank$ $0$ and $a$-number $a$. 
\label{prop:eo_classification}
\end{prop}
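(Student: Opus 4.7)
The plan is to run through the eight Ekedahl-Oort types in the $p$-rank zero locus appearing in the Hasse diagram from Proposition \ref{prop:first_eo_prop}, organized by $a$-number, and match each with an isomorphism class of $\mathrm{BT}_1$ group scheme. Throughout, I will use that both $p$-rank and $a$-number are additive under direct sum, and that for any $g \geq 1$ there is a unique symmetric indecomposable $\mathrm{BT}_1$ group scheme $I_{g,1}$ with $p$-rank $0$ and $a$-number $1$ (Lemma \ref{lem:Ig1}). More generally, a standard classification of indecomposable symmetric $\mathrm{BT}_1$ group schemes implies that $I_{r,a}$ with the stated invariants is unique whenever it exists, which is what justifies the uniform notation in the statement.

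First I will handle the extreme cases. When $a(A) = 1$ and $\prank(A) = 0$, the $\mathrm{BT}_1$ group scheme $A[p]$ must be indecomposable (any direct summand would contribute an additional $\alpha_p$), so $A[p] \cong I_{4,1}$ and $\mu_A = [4]$ by Lemma \ref{lem:Ig1}. When $a(A) = 4$, $A$ is superspecial, so $A[p] \cong (I_{1,1})^{\oplus 4}$ and $\mu_A = [4,3,2,1]$. The case $a(A) = 3$ is exactly what was carried out in Example \ref{exmp:dim4_eo_anum3}: the decomposable possibilities $I_{3,2} \oplus I_{1,1}$ and $I_{2,1} \oplus (I_{1,1})^{\oplus 2}$ give types $[4,2,1]$ and $[4,3,2]$ via explicit Dieudonné-module computations, leaving $[4,3,1]$ to correspond to an indecomposable group scheme which I name $I_{4,3}$.

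The heart of the proof is the $a$-number $2$ case, which splits into three Ekedahl-Oort types $[4,1]$, $[4,2]$, $[4,3]$. Additivity forces $A[p]$ to be either indecomposable or a sum of two indecomposable summands each with $a$-number $1$ and $p$-rank $0$, hence of the form $I_{r_1,1} \oplus I_{r_2,1}$ with $r_1 + r_2 = 4$. This produces exactly $I_{3,1} \oplus I_{1,1}$ and $I_{2,1} \oplus I_{2,1}$ as decomposable candidates. For each of these I will compute the final filtration in the polarized Dieudonné module using the explicit bases of Lemma \ref{lem:Ig1}, in the same style as Example \ref{exmp:dim4_eo_anum3}: stacking the $\mathcal{F}$- and $\mathcal{V}$-actions on direct summands and reading off the jumps in $\nu$. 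These yield $\mu = [4,2]$ and $\mu = [4,3]$ respectively. The remaining type $[4,1]$ must then correspond to the unique indecomposable $\mathrm{BT}_1$ group scheme of rank $p^8$, $p$-rank $0$, and $a$-number $2$, which I denote $I_{4,2}$.

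The main obstacle is not conceptual but bookkeeping: one must carry out the Dieudonné-module computations for $I_{3,1} \oplus I_{1,1}$ and $I_{2,1} \oplus I_{2,1}$ cleanly enough to read off the final types, and invoke the classification (or cite Oort's work \cite{oort_eo_classification}) to guarantee uniqueness of the indecomposable pieces $I_{4,1}$, $I_{4,2}$, $I_{4,3}$, so that the listed isomorphism type of $A[p]$ is the only one producing the named Young type. Once these verifications are in place, the eight bullet points exhaust the possibilities in the diagram of Proposition \ref{prop:first_eo_prop}, and the proposition follows.
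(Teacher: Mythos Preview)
Your proposal is correct and follows essentially the same approach as the paper: the paper's proof simply observes that Lemma \ref{lem:Ig1} and the preceding Examples (in particular Example \ref{exmp:dim4_eo_anum3}) already handle most cases, and cites \cite{oort_eo_classification}, \cite{pries_eo_classification}, and \cite{elkinpries} for the underlying classification of symmetric $\mathrm{BT}_1$ group schemes. You are just a bit more explicit than the paper in spelling out the $a$-number $2$ computations for $I_{3,1}\oplus I_{1,1}$ and $I_{2,1}\oplus I_{2,1}$, but this is the same method carried one step further rather than a different route.
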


\begin{proof}
We almost proved this in Lemma \ref{lem:Ig1} and the previous Examples. See \cite{oort_eo_classification}, Section 9 for the classification of symmetric $\mathrm{BT}_1$ group schemes, or \cite{pries_eo_classification}, 4.4 together with \cite{elkinpries}, Remark 5.13.
\end{proof}

The final notice we mention in this section is a result by Ekedahl and van der Geer about the irreducibility of certain Ekedahl-Oort strata.

\begin{prop}
If $\mu$ is an eligible Young type and $\mu \not \in \{[4, 3], [4, 3, 1], [4, 3, 2], [4, 3, 2, 1]\}$, then the Ekedahl-Oort stratum $Z_{\mu}$ is irreducible in $\cA_4$. 
\label{prop:ekedahlvdgeer}
\end{prop}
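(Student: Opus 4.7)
The plan is to reduce the proposition to a general irreducibility theorem of Ekedahl and van der Geer, to the effect that every Ekedahl--Oort stratum in $\cA_g\otimes \Fbar_p$ that is not entirely contained in the supersingular locus $\cS_g$ is geometrically irreducible. Their proof proceeds via a monodromy argument on the full flag space lying over a non-supersingular EO stratum: one shows that the flag space is irreducible, and descends irreducibility to the stratum itself.

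First I would list the eligible Young types in dimension four and split them according to whether they sit inside $\cS_4$. The types with $\mu_1<4$ have constant $\prank$ equal to $4-\mu_1>0$ on their stratum, hence they cannot lie in $\cS_4$. Among the $p$-rank zero Young types ($\mu_1=4$), Proposition \ref{prop:first_eo_prop} identifies $\{[4,3],[4,3,1],[4,3,2],[4,3,2,1]\}$ as exactly those entirely contained in $\cS_4$, leaving $\{[4],[4,1],[4,2],[4,2,1]\}$ as the $p$-rank zero Young types that each still contain points of $\mathcal{N}_{1/3}\cup \mathcal{N}_{1/4}$.

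Applying the Ekedahl--van der Geer theorem to each Young type $\mu$ not in the excluded set of the proposition then yields the desired irreducibility of $Z_\mu$. The only local check is that these strata really do meet the non-supersingular locus, which is immediate from positivity of the $\prank$ in the $\mu_1<4$ cases, and is supplied by Proposition \ref{prop:first_eo_prop} for the four $p$-rank zero cases $[4],[4,1],[4,2],[4,2,1]$. The substantive difficulty, namely the monodromy argument that forces irreducibility of the flag space over a non-supersingular EO stratum, is handled externally in the Ekedahl--van der Geer paper, so no further work beyond the citation is needed here; the only bookkeeping internal to this paper is the partition of eligible Young types above.
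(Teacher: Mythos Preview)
Your proposal is correct and takes essentially the same approach as the paper: both reduce directly to the Ekedahl--van der Geer irreducibility theorem (Theorem 11.5 of \cite{ekedahlvdgeer}) for Ekedahl--Oort strata not contained in the supersingular locus. The paper simply cites that theorem without further comment, whereas you spell out explicitly---via positive $p$-rank for $\mu_1<4$ and via Proposition~\ref{prop:first_eo_prop} for the remaining four $p$-rank zero types---why the hypothesis of that theorem is satisfied precisely for $\mu\notin\{[4,3],[4,3,1],[4,3,2],[4,3,2,1]\}$.
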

\begin{proof}
See \cite{ekedahlvdgeer}, Theorem 11.5. 
\end{proof}

\newpage
\section{Ekedahl-Oort type $[4, 3]$}
Smooth genus-$4$ curves can be either hyperelliptic or trigonal. We focus on those over $k$, a field of characteristic two, lying inside the $\tworank$ zero locus. By Proposition \ref{prop:first_eo_prop}, we see that investigating the locus of supersingular curves is related to understanding the Ekedahl-Oort strata defined by a Young type $\mu$ with $\mu \leq [4, 3]$. 

Here we consider the type $[4, 3]$ and describe the curves inside the corresponding locus using explicit computations of the associated Hasse-Witt matrices. By the following result, we find there are no smooth hyperelliptic curves of genus four in characteristic two with the Ekedahl-Oort type $[4, 3]$.

\begin{prop}
Let $C$ be a smooth hyperelliptic curve of genus four defined over a field $k$ of characteristic two. If $\tworank(C) = 0$, then the Ekedahl-Oort type of $C$ is $[4, 2]$. 
\label{prop:eo43_he}
\end{prop}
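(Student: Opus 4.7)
The plan is to reduce $C$ to an Artin--Schreier normal form, compute the Cartier operator on a canonical basis of $H^0(C,\Omega^1)$, and extract the Ekedahl--Oort type from the resulting Jordan data.

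First I would put $C$ in normal form: since $C$ is hyperelliptic of genus four with $\tworank(C)=0$, its Artin--Schreier double cover $C\to\mathbb{P}^1$ has a unique wildly ramified point (by Deuring--Shafarevich, the $2$-rank of such a cover of $\mathbb{P}^1$ equals the number of branch points minus one). Moving that point to infinity and absorbing squares into $y$ via $y\mapsto y+g(x)$, we may assume
\[
C:\; y^2+y \;=\; c_9 x^9 + c_7 x^7 + c_5 x^5 + c_3 x^3 + c_1 x,\qquad c_9\neq 0.
\]

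Second, by Riemann--Hurwitz the canonical divisor equals $6P_\infty$, where $P_\infty$ is the unique point above infinity (the different exponent there is $10$). Hence $\omega_i:=x^{i-1}\,dx$ for $i=1,2,3,4$ is a basis of $H^0(C,\Omega^1)$, with distinct valuations $8-2i$ at $P_\infty$, and independent of the $c_j$. A direct computation using $\mathcal{C}(f\,df)=df$ in characteristic two, together with the exactness of $dx=d(x)$ and $x^2\,dx=d(x^3)$, gives
\[
\mathcal{C}(dx)=0,\quad \mathcal{C}(x\,dx)=dx,\quad \mathcal{C}(x^2\,dx)=0,\quad \mathcal{C}(x^3\,dx)=x\,dx,
\]
so $\mathcal{C}$ has Jordan chains $x^3\,dx\mapsto x\,dx\mapsto dx\mapsto 0$ and $x^2\,dx\mapsto 0$, of lengths $3$ and $1$; its Jordan type on $H^0(C,\Omega^1)$ is therefore $(3,1)$, giving $a(\Jac_C)=2$. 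By Proposition~\ref{prop:eo_classification} the candidates with $\mu_1=4$ and $a=2$ are $[4,1]$, $[4,2]$, and $[4,3]$, and $[4,3]$ corresponds to $I_{2,1}^{\oplus 2}$ with $\mathcal{C}^2=0$; since $\mathcal{C}^2(x^3\,dx)=dx\neq 0$, this rules out $[4,3]$.

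The main obstacle is separating $[4,2]$ from $[4,1]$: the Dieudonn\'e modules $I_{3,1}\oplus I_{1,1}$ and $I_{4,2}$ have the same $F$- and $V$-Jordan types, so no Cartier information on $H^0(C,\Omega^1)$ alone can distinguish them. Since $Z_{[4,1]}\subseteq\mathcal{N}_{1/4}$ by Proposition~\ref{prop:first_eo_prop}, it suffices to show $\Jac_C\notin\mathcal{N}_{1/4}$; this I would do either by a direct analysis of the Weil polynomial of the normal form above, or equivalently by exhibiting an explicit sub-$I_{1,1}\hookrightarrow\mathbb{D}(\Jac_C[2])$, which forces the splitting $\mathbb{D}(\Jac_C[2])\cong I_{3,1}\oplus I_{1,1}$ and hence $\mu_C=[4,2]$.
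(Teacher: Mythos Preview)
Your normal form and your computation of the Cartier operator on $H^0(C,\Omega^1_C)$ are correct, and the argument ruling out $[4,3]$ via $\mathcal{C}^2\neq 0$ is clean. The paper does not give an independent proof here; it cites \cite{vdgeercycle}, Theorem 3.2, and \cite{elkinpries}, Corollary 5.3, where the Ekedahl--Oort type is computed from the full de Rham cohomology $H^1_{\mathrm{dR}}(C)$ together with both operators $F$ and $V$, not from $V$ on $H^0(C,\Omega^1_C)$ alone.

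The genuine gap is your step 3. You are right that $I_{4,2}$ and $I_{3,1}\oplus I_{1,1}$ share the same $F$- and $V$-Jordan types (both $(4,2,1,1)$ on the $8$-dimensional module, hence both $(3,1)$ on $\mathrm{im}\,V$); what separates them is the interaction of $F$ and $V$, for instance $\dim V\bigl(F^{-1}(V^2 H^1_{\mathrm{dR}})\bigr)$, which equals $3$ for $[4,1]$ and $2$ for $[4,2]$. Neither of your proposed resolutions reaches this. The Weil-polynomial route is only meaningful over finite fields, and even there you give no mechanism to exclude slopes $(1/4,3/4)$ uniformly over the four-parameter family $y^2+y=c_9x^9+c_7x^7+c_5x^5+c_3x^3+c_1x$; since $Z_{[4,1]}\subseteq\mathcal{N}_{1/4}$ and $Z_{[4,2]}\subseteq\cS_4\cup\mathcal{N}_{1/3}$, this is essentially equivalent to the statement you want. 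The sub-$I_{1,1}$ route is sound in principle (a polarized sub-$I_{1,1}$ would split off, contradicting indecomposability of $I_{4,2}$), but producing such a submodule inside $\mathbb{D}(\Jac_C[2])\cong H^1_{\mathrm{dR}}(C)$ requires computing $F$ on classes lying outside $H^0(C,\Omega^1_C)$, which your setup does not provide. To close the gap you need to write down an explicit basis of $H^1_{\mathrm{dR}}(C)$ (e.g.\ via the \v{C}ech--de Rham complex for the two standard affine charts of the Artin--Schreier model), compute both $F$ and $V$ on it, and then either run the canonical filtration or exhibit the $I_{1,1}$ summand directly; this is precisely what \cite{elkinpries} carry out.
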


\begin{proof}
See \cite{vdgeercycle}, Theorem 3.2, for a statement without proof. See also \cite{elkinpries}, Corollary 5.3.
\end{proof}

A non-hyperelliptic smooth curve $C$ of genus four is a trigonal curve. Its model in the canonical embedding in $\P^3$ is an intersection of a quadric and a cubic hypersurface. Over an algebraic closure $\bar{k}$ of $k$ and after a suitable choice of coordinates, we have that any such $C$ (or more precisely, its canonical model) lies either on a \textit{non-singular quadric} $$Q_{ns}: XY + ZT = 0,$$ or on a \textit{quadric cone} (a singular quadric) $$Q_{c}:  XY + T^2 = 0.$$ 
For those lying on a quadric cone, we have the following result. 

\begin{prop}
In characteristic two, no non-hyperelliptic smooth curves of genus four with the Ekedahl-Oort type $[4, 3]$ lie on a quadric cone. 
\label{prop:eo43_cone}
\end{prop}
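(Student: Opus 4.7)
The plan is an explicit Hasse-Witt computation. By equation \eqref{eq:anumberofcone}, any non-hyperelliptic smooth genus-$4$ curve $C$ lying on a quadric cone with $\tworank(C) = 0$ has $a(C) = 2$; by Proposition \ref{prop:eo_classification}, its Ekedahl-Oort type then lies in $\{[4,1], [4,2], [4,3]\}$, with $2$-torsion group schemes $I_{4,2}$, $I_{3,1}\oplus I_{1,1}$, and $I_{2,1}^{\oplus 2}$ respectively. Ruling out $[4,3]$ thus amounts to showing that $\Jac_C[2]$ is never isomorphic to the decomposable $I_{2,1}^{\oplus 2}$.

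Concretely, I would fix coordinates so that $Q_c : XY + T^2 = 0$ with vertex $[0:0:1:0]$ and write $C = Q_c \cap \{F = 0\}$ for a cubic $F\in \bar{k}[X,Y,Z,T]$. After reducing $F$ modulo the ideal of $Q_c$ and using $\Aut(Q_c)$, bring $F$ into a manageable normal form depending on a few parameters. Parametrizing the cone by its rulings through the vertex, $[X:Y:Z:T] = [u^2:v^2:z:uv]$, exhibits $C$ as the trigonal cover $\pi:C\to \P^1$. Adjunction applied to the canonical embedding then yields an explicit basis $\omega_1,\dots,\omega_4$ of $H^0(C,\Omega^1)$, for example from a residue formula for the complete intersection $Q_c \cap \{F = 0\}$.

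Next, compute the Cartier operator $\mathcal{V}$ on this basis as a $4\times 4$ matrix whose entries are polynomials in the coefficients of $F$. Imposing $\tworank(C) = 0$ amounts to semilinear nilpotence of $\mathcal{V}$ and cuts out a subvariety on which $\rank \mathcal{V} = 2$ automatically. On this subvariety one must look beyond the Hasse-Witt matrix, since $\rank \mathcal{V} = 2$ is common to all three candidate EO types: compute the matrix of $\mathcal{V}\circ\mathcal{V}$ (given by $\mathcal{V}\cdot \mathcal{V}^{(1/2)}$), together with the action of $\mathcal{F}$ on $H^1(C,\O_C)$, and use the resulting data to show that $\Jac_C[2]$ is always indecomposable on the locus, ruling out $I_{2,1}^{\oplus 2}$. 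The main obstacle is precisely this last step: the Hasse-Witt matrix alone cannot detect decomposability of $\Jac_C[2]$, so finer Frobenius-semilinear invariants must be computed and controlled over the parameter family of cubics $F$ on the singular quadric cone in characteristic $2$.
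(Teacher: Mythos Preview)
Your proposal contains a genuine misconception that would lead you astray. You write that ``the Hasse-Witt matrix alone cannot detect decomposability of $\Jac_C[2]$'' and therefore propose computing ``finer Frobenius-semilinear invariants'' to establish indecomposability. In fact, among the three $a$-number~$2$, $p$-rank~$0$ types $[4,1]$, $[4,2]$, $[4,3]$, the type $[4,3]$ is \emph{completely} characterized by the Cartier operator on $H^0(C,\Omega^1)$: it is the unique one with $\rank_k\mathcal{C}=2$ and $\mathcal{C}^2=0$. (For $I_{2,1}^{\oplus 2}$ one checks directly from Lemma~\ref{lem:Ig1} that $\mathcal{C}^2$ vanishes on $\mathcal{V}(D_8)$; for $I_{3,1}\oplus I_{1,1}$ it does not, by Example~\ref{exmp:dim3_eo_anum2}.) The semilinear square $\mathcal{C}^2$ is of course determined by the Hasse-Witt matrix itself, so no additional structure is needed. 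This is exactly how the paper characterizes $[4,3]$ in the proof of Proposition~\ref{prop:eo43_smthquad}, and the sentence just before the present proposition states explicitly that the computations in the cited reference proceed the same way.

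Consequently your target, ``show $\Jac_C[2]$ is always indecomposable on the locus'', is both harder than necessary and possibly false: indecomposability would also exclude $[4,2]=I_{3,1}\oplus I_{1,1}$, and nothing prevents that type from occurring on the cone. The correct and much simpler strategy is the one used for the non-singular quadric: put $C$ in a normal form on $Q_c$, write down the St\"ohr--Voloch Hasse-Witt matrix, impose the two algebraic conditions $\rank H_C=2$ and $\mathcal{C}^2=0$ on the coefficients, and show that any curve satisfying them acquires a singular point. Your parametrization of the cone and reduction of $F$ are fine as setup; the gap is only in what you do with the Hasse-Witt matrix once you have it.
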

\begin{proof}
See \cite{ddmt}, Theorem 4.5.
\end{proof}

In the computations proving Proposition \ref{prop:eo43_cone}, it was enough to discuss the Hasse-Witt matrices of curves lying on $Q_c$ using the conditions equivalent to having the type $[4, 3]$. Let us describe the Hasse-Witt matrices of smooth curves of genus four in characteristic two that lie on the non-singular quadric $Q_{ns}$.\\
\\
Let $k = \bar{k}$ be an algebraically closed field of characteristic two.
The \textit{Cartier operator} $\mathcal{C}$ on the space of regular differentials on a genus-$4$ curve $C$ over $k$  is by definition $$\mathcal{C}\left((f_0^2 + f_1^2x) \dd x\right) = f_1\dd x, $$ for a separating variable $x$ of $\kappa(C)$,  $f_0, f_1 \in \kappa(C)$, and $\omega = (f_0^2 + f_1^2x) \dd x \in H^{0}(C, \Omega^1_C)$. 

This operator satisfies the following properties for any function $f \in \kappa(C)$ and regular differentials $\omega, \omega_1, \omega_2 \in H^0(C, \Omega_C^1)$:
 $$\mathcal{C}(\omega_1 + \omega_2) = \mathcal{C}(\omega_1) + \mathcal{C}(\omega_2), \quad \mathcal{C}(f^2\omega) = f\mathcal{C}(\omega), \quad \mathcal{C}(\dd f) = 0,$$ and
    $$\mathcal{C}\left (\frac{\dd f}{f} \right ) = \frac{\dd f}{f}.$$

Given a basis $\{\omega_1, \omega_2, \omega_3, \omega_4\}$ of $H^0(C, \Omega_C^1)$ and the relations $$\mathcal{\Car}(\omega_i) = \sum_{j = 1}^4h_{i j}\cdot \omega_j, \text{ for }i = 1, 2, 3, 4 $$ with $h_{ij} \in k$, the \textit{Hasse-Witt matrix} of $C$ is $$H_C = \left (h_{ij}^2 \right )_{1\leq i, j \leq 4}.$$ Note that, by definition, the rank of $H_C$ equals the rank of the Cartier operator $\mathcal{C}$. Furthermore, using the Cartier operator, we can compute the $\tworank$ and the $a\text{-number}$ of a curve $C$ as $$\tworank(C) = \mathrm{rk}(\mathcal{C}^4) = \dim(\mathrm{Im}(\mathcal{C}^4)),\quad\text{ and}\quad   a(C) = \dim(\ker(\mathcal{C})) = 4 - \mathrm{rk}(\mathcal{C}).$$

In \cite{stohrvoloch}, Section 2, St\"ohr and Voloch gave formulas for computing the Hasse-Witt matrices of non-hyperelliptic smooth genus four curves over $k$. Consider the Segre embedding  
\begin{equation}
\mathbb{P}^1\times \mathbb{P}^1 \overset{\cong}{\to} Q_{ns} = \{(X:Y:Z:T)\in \mathbb{P}^3:  XY = ZT\},
\label{eq:segre_splitquad}    
\end{equation}
and note that $Q_{ns}$ contains the affine plane $\{(a:b:ab:1) \in S: a, b\in \Fbar_2\}$. For any non-hyperelliptic curve lying on $Q_{ns}$, we get an affine equation of the form $$C: f(x, y) = \sum_{i, j = 0}^3a_{i,j}x^iy^j = 0, $$ for which a basis of regular differentials $\{\omega_1, \omega_2, \omega_3, \omega_4\}$ for $C$ can be given as $$\omega_1 = \frac{1}{\partial f/\partial y} \dd x,\quad \omega_2 = \frac{x}{\partial f/\partial y} \dd x,\quad \omega_3 = \frac{y}{\partial f/\partial y} \dd x, \quad\text{and}\quad \omega_4 = \frac{xy}{\partial f/\partial y} \dd x, $$ and the Hasse-Witt matrix expressed as 

\begin{equation}
\label{eq:hassewitt_split_quadric}
H_C = \begin{pmatrix}
a_{11} & a_{31} & a_{13} & a_{33}\\ 
a_{01} & a_{21} & a_{03} & a_{23}\\ 
a_{10} & a_{30} & a_{12} & a_{32}\\ 
a_{00} & a_{20} & a_{02} & a_{22}
\end{pmatrix}.
\end{equation}

Note that the group $G = C_2\ltimes (\PGL_2(k)\times \PGL_2(k))$ acts on $\P^1\times \P^1$, where $C_2$ is the cyclic group of order two which acts by interchanging the $\P^1$'s. Using the isomorphism \eqref{eq:segre_splitquad}, explicitly given by $$((x:y),(z:t))\mapsto (xz: yt:yz:xt),$$ we see that each element of $G$ induces a projective automorphism preserving $Q_{ns}$. See also \cite{savitt_ptsgenF8}, Section 3.

\begin{lem}
Let $k$ be an algebraically closed field of characteristic two. Up to isomorphism, any trigonal, smooth genus-$4$ curve $C$ over $k$ that lies on the non-singular quadric can be written as $V(XY + ZT, q)$ in $\P^3$, with $q\in k[X, Y, Z, T]$ a cubic of the form: 
\begin{enumerate}
    \item[a)] $q = 1\cdot X^3 + 1\cdot Y^3 + a_{21}X^2Y + a_{31}X^2Z + a_{20}X^2T + a_{12}XY^2 + a_{22}XYZ + a_{11}XYT + a_{32}XZ^2 + a_{10}XT^2 + a_{13}Y^2Z + a_{02}Y^2T + a_{23}YZ^2 + a_{01}YT^2$, or
    \item[b)] $q = 1\cdot X^3 + a_{21}X^2Y + a_{31}X^2Z + a_{20}X^2T + a_{12}XY^2 + a_{22}XYZ + a_{11}XYT + a_{32}XZ^2 + a_{10}XT^2 + a_{13}Y^2Z + a_{02}Y^2T + a_{23}YZ^2 + a_{01}YT^2$. 
\end{enumerate}
Moreover, the coefficients $a_{ij} \in k$ appearing in $a)$ and $b)$ are the ones occurring in \eqref{eq:hassewitt_split_quadric} with $a_{00} = a_{33} = 0$, and $a_{30}$ and $a_{03}$ the coefficients of $X^3$ and $Y^3$ respectively. 
\label{lem:split_quad_nice_eqs}
\end{lem}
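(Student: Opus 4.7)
The plan is to use the $G = C_2 \ltimes (\PGL_2(k) \times \PGL_2(k))$-action on $Q_{ns}$ to normalise the defining cubic. By the Stöhr-Voloch presentation, on the affine patch $\{T = 1,\ Z = xy\}$ the curve is cut out by $f(x, y) = \sum_{0 \le i, j \le 3} a_{ij} x^i y^j = 0$, and the $16$ coefficients correspond, after homogenising, to the $12$ monomials of $q$ displayed in (a), (b), together with $X^3, Y^3, Z^3, T^3$ accounting for $a_{30}, a_{03}, a_{33}, a_{00}$ respectively; the ambiguity among representatives such as $XYT$ versus $ZT^2$ modulo $XY + ZT$ is resolved by the specific choice listed in the statement. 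The key observation is that $a_{00}, a_{33}, a_{30}, a_{03}$ are the values $q(0, 0, 0, 1), q(0, 0, 1, 0), q(1, 0, 0, 0), q(0, 1, 0, 0)$, so they vanish precisely when $C$ passes through the corresponding ``corner'' of $Q_{ns}$; in the affine coordinates these are $(0,0), (\infty, \infty), (\infty, 0), (0, \infty) \in \P^1 \times \P^1$.

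First I would position two points of $C$ at $(0, 0)$ and $(\infty, \infty)$. Since $C$ has bidegree $(3, 3)$ on $\P^1 \times \P^1$, it meets each ruling in exactly three points. Fix any $P_1 = (x_1, y_1) \in C$ and call $P_2 = (x_2, y_2) \in C$ \emph{admissible} if $x_2 \ne x_1$, $y_2 \ne y_1$, and not both of $(x_2, y_1), (x_1, y_2)$ lie on $C$; the non-admissible $P_2$ are bounded in number by $3 + 3 + 3 \cdot 3 = 15$, so admissible pairs exist as $C$ is infinite. The product Möbius transformation $(x, y) \mapsto ((x - x_1)/(x - x_2),\ (y - y_1)/(y - y_2))$ lies in $G$, sends $P_1 \mapsto (0, 0)$ and $P_2 \mapsto (\infty, \infty)$, and, since the preimages of $(\infty, 0)$ and $(0, \infty)$ are $(x_2, y_1)$ and $(x_1, y_2)$, guarantees that $a_{30}$ and $a_{03}$ are not simultaneously zero in the new coordinates. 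Applying the $C_2$-swap of $x$ and $y$ if needed, I may assume $a_{30} \ne 0$.

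The stabiliser in $\PGL_2(k) \times \PGL_2(k)$ of $\{(0, 0), (\infty, \infty)\}$ is the torus $(x, y) \mapsto (\lambda x, \mu y)$; combined with rescaling the equation by $\nu \in k^*$, it acts on coefficients by $a_{ij} \mapsto \nu \lambda^i \mu^j a_{ij}$. Solving $\nu \lambda^3 = a_{30}^{-1}$ normalises $a_{30} = 1$; if moreover $a_{03} \ne 0$, a further choice of $\mu$ with $\mu^3 = (\nu a_{03})^{-1}$, possible since $\bar{k}$ contains cube roots even in characteristic two, sets $a_{03} = 1$ and yields form (a). If $a_{03} = 0$ we land in form (b). The main obstacle is the combinatorial step of finding an admissible pair, which rules out the a priori degenerate possibility that every two points of $C$ complete a $2 \times 2$ grid lying entirely on $C$; this relies only on the $(3, 3)$-intersection bound with rulings and the irreducibility of $C$.
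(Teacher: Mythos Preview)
Your proof is correct and takes a genuinely different route from the paper's. The paper argues purely by coordinate manipulation: after replacing $ZT$ by $XY$ to remove monomials divisible by $ZT$, it writes down three explicit families of projective automorphisms of $Q_{ns}$ (shears such as $X\mapsto X+aZ$, diagonal scalings, and coordinate permutations), and then runs a short case split---if some cube coefficient among $X^3,Y^3,Z^3,T^3$ is nonzero, permute to make it $a_{30}$, shear away $Z^3$ and $T^3$, and scale; if all four cube coefficients vanish, use irreducibility of $q$ to force one of $X^2Z,X^2T,Y^2Z,Y^2T$ to appear and shear back into the previous case. Your approach is geometric: you recognise the four cube coefficients as the conditions that $C$ pass through the four ``corner'' points of $\P^1\times\P^1$, produce an admissible pair $P_1,P_2\in C$ by a finite-versus-infinite count using only the $(3,3)$ intersection number with rulings, and move them to $(0,0),(\infty,\infty)$ by a single product-M\"obius map, achieving $a_{00}=a_{33}=0$ and $(a_{30},a_{03})\neq(0,0)$ simultaneously; the residual torus and the $C_2$-swap then normalise. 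The paper's version is constructive---given a cubic one reads off the normal form step by step---while yours trades the case analysis for a cleaner existence argument and makes the role of the $G$-action on points (rather than on monomials) transparent. One cosmetic point: your counting step and M\"obius formula are most naturally phrased directly on $\P^1\times\P^1$ rather than on an affine chart, so that points with $x_i=\infty$ or $y_i=\infty$ need no separate mention.
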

\begin{proof}
Any $C$ lying on $Q_{ns}\cap V(q)$, for $q$ with a monomial divisible by $ZT$ with a non-zero coefficient, also lies on $Q_{ns}\cap V(q)$ where $q$ does not contain such monomials.

Then, consider the following three types of projective automorphisms preserving $Q_{ns}$ induced by $G$: 
\begin{enumerate}
    \item ${\bigl(\begin{smallmatrix}
X & Y & Z & T\\ 
 X + aZ & Y & Z & T + aY 
\end{smallmatrix}\bigr), \bigl(\begin{smallmatrix}
X & Y & Z & T\\ 
 X & Y + aT & Z + aX & T 
\end{smallmatrix}\bigr), \bigl(\begin{smallmatrix}
X & Y & Z & T\\ 
 X + aT & Y & Z + aY & T 
\end{smallmatrix}\bigr)}$, and ${\bigl(\begin{smallmatrix}
X & Y & Z & T\\ 
 X  & Y + aZ& Z & T + aX 
\end{smallmatrix}\bigr),}$ with $a\in k$;

\item ${\bigl(\begin{smallmatrix}
X & Y & Z & T\\ 
 X & aY & aZ & T  
\end{smallmatrix}\bigr), \bigl(\begin{smallmatrix}
X & Y & Z & T\\ 
 aX & Y & Z & aT 
\end{smallmatrix}\bigr), \bigl(\begin{smallmatrix}
X & Y & Z & T\\ 
 aX& Y & aZ  & T 
\end{smallmatrix}\bigr),}$ and ${\bigl(\begin{smallmatrix}
X & Y & Z & T\\ 
 X  & aY & Z & aT 
\end{smallmatrix}\bigr),}$ with $a\in k$;

\item ${\bigl(\begin{smallmatrix}
X & Y & Z & T\\ 
 X & Y & T & Z 
\end{smallmatrix}\bigr), \bigl(\begin{smallmatrix}
X & Y & Z & T\\ 
 Z & T & X & Y 
\end{smallmatrix}\bigr),}$ and $\bigl(\begin{smallmatrix}
X & Y & Z & T\\ 
 T & Z & X & Y 
\end{smallmatrix}\bigr)$. 
\end{enumerate}
First, assume that at least one of the coefficients with $X^3, Y^3, Z^3$, and $T^3$ is non-zero. After applying a transformation of type $3$, we may assume that the coefficient $a_{30}$ with $X^3$ is non-zero and $a_{30} = 1$ after scaling. Possibly after using the first and then the third type $1$ transformation, we eliminate $Z^3$ and $T^3$ from $q$. If the coefficient with $Y^3$ is zero, $q$ is of type $b)$. Otherwise, applying the first transformation of type $2$, we get a $q$ of type $a)$.

Finally, assume that all the coefficients with $X^3, Y^3, Z^3$, and $T^3$ are zero. Since $q$ is irreducible, at least one of the coefficients with $X^2Z, X^2T$, or $Y^2Z, Y^2T$ is non-zero. After applying some transformation of type $1$, we will end up in an already discussed case. 
\end{proof}

In addition to the preceding, we have the following.

\begin{lem} Let $C = V(XY + ZT, q)$ in $\P^3$ be as in Lemma \ref{lem:split_quad_nice_eqs} with $q \in k[X, Y, Z, T]$ a cubic of form $a)$ or $b)$. The smoothness of $C$ implies $(a_{10}, a_{01}) \neq 0$ and $(a_{23}, a_{32}) \neq 0$.  
\label{lem:easy_singularity}
\end{lem}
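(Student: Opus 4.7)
The plan is to exhibit two specific points on $C$ at which the smoothness conditions translate directly into the non-vanishing statements we want, and to use the Jacobian criterion for smoothness to conclude.

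First I would observe that the points $P_1 = (0:0:0:1)$ and $P_2 = (0:0:1:0)$ both lie on $C$. Indeed, both clearly lie on the quadric $Q_{ns}: XY + ZT = 0$, and for either form $a)$ or $b)$ of $q$ in Lemma \ref{lem:split_quad_nice_eqs}, every monomial appearing in $q$ is divisible by at least one of $X$, $Y$, $Z$ (the excluded monomials $T^3, Z^3, Z^2T, ZT^2, XZT, YZT$ together with possibly $Y^3$ are the only ones whose vanishing at $P_1$ or $P_2$ is not automatic from divisibility by $X, Y$, and $Z, T$ respectively). Hence $q$ vanishes at $P_1$ and at $P_2$, so both points lie in $V(q)\cap Q_{ns} = C$.

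Next I would compute the Jacobian at these two points in characteristic two, using that $2a = 0$ and $3a = a$. The gradient of $XY + ZT$ is $(Y, X, T, Z)$, which at $P_1$ equals $(0,0,1,0)$ and at $P_2$ equals $(0,0,0,1)$. A direct calculation, discarding all monomials whose partial derivative contains a factor $X, Y$, or the complementary one of $Z, T$, shows that the gradient of $q$ at $P_1 = (0:0:0:1)$ is $(a_{10}, a_{01}, 0, 0)$ (the only surviving contributions come from $a_{10}XT^2$ via $\partial/\partial X$ and $a_{01}YT^2$ via $\partial/\partial Y$), and analogously that the gradient of $q$ at $P_2 = (0:0:1:0)$ is $(a_{32}, a_{23}, 0, 0)$.

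Finally I would invoke the Jacobian criterion: for $C$ to be smooth at $P_i$, the two gradient vectors must be linearly independent. At $P_1$, the gradient of the quadric points in the $Z$-direction, so independence requires $(a_{10}, a_{01}) \neq (0,0)$; at $P_2$, the gradient of the quadric points in the $T$-direction, so independence requires $(a_{32}, a_{23}) \neq (0,0)$. The only minor care point is keeping track of which monomials' derivatives genuinely vanish in characteristic two, but this is mechanical, so I expect no real obstacle.
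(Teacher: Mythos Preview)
Your proposal is correct and follows exactly the same approach as the paper: the paper's proof simply states that $(0:0:0:1)$ is singular when $a_{10}=a_{01}=0$ and $(0:0:1:0)$ is singular when $a_{23}=a_{32}=0$, and you have supplied the Jacobian computation that justifies this. Your treatment is just a more detailed version of the same argument.
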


\begin{proof}
If $a_{10}= a_{01} = 0$, then $(0:0:0:1)$ is a singular point of $C$, and if $a_{23} = a_{32} =  0$ then $(0:0:1:0)$ is a singular point of $C$.
\end{proof}

Using the previous Lemmas, we are able to show the following Proposition.

\begin{prop}
In characteristic two, no non-hyperelliptic smooth curves of genus four with the Ekedahl-Oort type $[4, 3]$ lie on a non-singular quadric $Q_{ns}$.
\label{prop:eo43_smthquad}
\end{prop}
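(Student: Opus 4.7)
The plan is to combine the explicit form of the Hasse-Witt matrix from Lemma \ref{lem:split_quad_nice_eqs} with the algebraic conditions imposed by Ekedahl-Oort type $[4, 3]$, and derive a contradiction via Lemma \ref{lem:easy_singularity}. By Lemma \ref{lem:split_quad_nice_eqs}, I may assume $C = V(XY + ZT, q)$ with $q$ of form $a)$ (so $a_{30} = a_{03} = 1$) or $b)$ (so $a_{30} = 1$, $a_{03} = 0$), and with $a_{00} = a_{33} = 0$, so that the Hasse-Witt matrix is exactly the one displayed in \eqref{eq:hassewitt_split_quadric}.

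Next, I would translate the condition $\mu_C = [4, 3]$ into matrix relations. By Proposition \ref{prop:eo_classification}, this EO type is equivalent to $\Jac_C[2] \cong I_{2, 1}\oplus I_{2, 1}$. Applying Lemma \ref{lem:Ig1} to each summand, $\mathcal{V}^2 = 0$ already on each $\mathbb{D}(I_{2, 1})$, so the Cartier operator $\mathcal{C}$ on $H^0(C, \Omega_C^1)$ has rank $2$ and satisfies $\mathcal{C}^2 = 0$. Writing $M = (h_{ij})$ for the matrix of $\mathcal{C}$ in the basis $\omega_1, \omega_2, \omega_3, \omega_4$ (so $H_C = (h_{ij}^2)$), these conditions become: (i) all $3 \times 3$ minors of $H_C$ vanish while some $2 \times 2$ minor does not, and (ii) the semilinear equation $\mathcal{C}^2 = 0$, which, in view of $\mathcal{C}^2(\omega_i) = \sum_{j, k} h_{ij}^{1/2} h_{jk} \omega_k$, is equivalent to $M \cdot H_C = 0$ (equivalently, after squaring, to $H_C \cdot H_C^{(2)} = 0$). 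Written out, this yields explicit polynomial identities in the coefficients $a_{ij}$ of $q$.

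The proof would then be a casework on the two linearly independent columns of $H_C$. Up to the action of the subgroup $C_2\ltimes (\mathrm{PGL}_2(k)\times \mathrm{PGL}_2(k))$ stabilizing $Q_{ns}$ used in Lemma \ref{lem:split_quad_nice_eqs} (in particular the $C_2$ swapping the two $\mathbb{P}^1$-factors, which on the Hasse-Witt matrix \eqref{eq:hassewitt_split_quadric} transposes suitable pairs of entries), one can reduce to a small number of configurations for which pair of columns of $H_C$ is linearly independent. In each configuration, the other two columns are expressed as linear combinations, and substituting into condition (ii) yields algebraic equations in the $a_{ij}$. Combined with Lemma \ref{lem:easy_singularity}, which forces $(a_{10}, a_{01}) \neq (0, 0)$ (controlling the first column of $H_C$) and $(a_{23}, a_{32}) \neq (0, 0)$ (controlling the last column), and with the normalization $a_{30} = 1$ (and $a_{03} \in \{0, 1\}$), each case is expected to collapse to a contradiction.

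The main obstacle is the bookkeeping: the matrix \eqref{eq:hassewitt_split_quadric} is not especially symmetric, the rank condition produces $\binom{4}{3}^2 = 16$ vanishing $3\times 3$ minors, and the identity $H_C \cdot H_C^{(2)} = 0$ couples the $a_{ij}$ in a somewhat involved way. Organizing the cases along the rows and columns containing the fixed entry $a_{30} = 1$, and exploiting the swap-symmetry to halve the workload, the argument should run in parallel with the proof of Proposition \ref{prop:eo43_cone} in \cite{ddmt}, Theorem 4.5, where the analogous statement for curves on the quadric cone $Q_c$ is established by the same type of Hasse-Witt computation.
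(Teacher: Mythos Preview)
Your plan is essentially the paper's strategy: reduce via Lemma \ref{lem:split_quad_nice_eqs} to the two normal forms, translate $\mu = [4,3]$ into $\rank H_C = 2$ together with $\mathcal{C}^2 = 0$, and then run a case analysis ending in a singularity. The paper organizes its cases by whether $a_{11}$ and $a_{22}$ vanish (equivalently, by whether $\mathcal{C}\omega_2, \mathcal{C}\omega_3$ span the image of $\mathcal{C}$) rather than by pairs of columns, but that is only a bookkeeping choice.

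There is, however, a real gap in your outline. You say the contradictions will come ``via Lemma \ref{lem:easy_singularity}'', i.e., by forcing $(a_{10},a_{01}) = 0$ or $(a_{23},a_{32}) = 0$. That does dispose of the degenerate subcases, but in the main case $a_{11}\neq 0$, $a_{22}\neq 0$ the rank-$2$ and nilpotency constraints do \emph{not} force any such vanishing; writing $a_{31} = \alpha a_{11}$, $a_{13} = \beta a_{11}$, $a_{20} = \gamma a_{22}$, $a_{02} = \delta a_{22}$, one solves all the Hasse-Witt relations explicitly in terms of $\alpha,\beta,\gamma,\delta,a_{11},a_{22}$ and obtains a genuine family of cubics meeting every condition in your step (i)--(ii). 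The contradiction in the paper comes from exhibiting, for each such cubic, an explicit singular point $P = (x:y:xy:1)$ with $(x,y)$ a solution of
\[
1 + \alpha x^2 + \beta y^2 = 0, \qquad \gamma x^2 + \delta y^2 + x^2y^2 = 0,
\]
and verifying directly that $\partial f/\partial x$, $\partial f/\partial y$, and $f + x\,\partial f/\partial x + y\,\partial f/\partial y$ all vanish at $P$. This is the technical heart of the argument and lies entirely outside the scope of Lemma \ref{lem:easy_singularity}; your plan should anticipate having to manufacture such a point rather than expecting the algebraic system alone to be inconsistent.
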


\begin{proof}
We discuss what the Hasse-Witt matrix $H_C$ of a curve $C$ can be case-by-case and conclude that there are no smooth ones of type $[4, 3]$. Being of type $[4,3]$ is equivalent to the conditions $\rank_k H_C = 2$ (i.e., $\<\mathcal{C}\omega_i: i = 1, 2, 3, 4\>$ is two-dimensional) and $\mathcal{C}^2\omega_i = 0$ for all $i$. By Lemma \ref{lem:split_quad_nice_eqs}, we have two possible forms of $H_C$. 

Let $C = V(XY + ZT, q)$ with $q$ of type $a)$, when we have $$H_C = \begin{pmatrix}
a_{11} & a_{31} & a_{13} & 0\\ 
a_{01} & a_{21} & 1 & a_{23}\\ 
a_{10} & 1 & a_{12} & a_{32}\\ 
0 & a_{20} & a_{02} & a_{22}
\end{pmatrix}.$$ 
\noindent
Assume first $a_{11} = a_{22} = 0$. There are two possibilities.

\begin{itemize}
    \item $\mathcal{C}\omega_2$ and $\mathcal{C}\omega_3$ are linearly independent. Then from $\mathcal{C}^2\omega_1 = \mathcal{C}^2\omega_4 = 0$ we obtain $a_{31} = a_{13} = a_{20} = a_{02} = 0$. Those are in contradiction with $\mathcal{C}^2\omega_2 = 0$.
    \item $\mathcal{C}\omega_2$ and $\mathcal{C}\omega_3$ are linearly dependent. Then $\Car\omega_1$ and $\Car\omega_4$ have to be linearly dependent too. Otherwise, the rank condition would imply $a_{10} = a_{01} = a_{23} = a_{32} = 0$, so $C$ would be singular by Lemma \ref{lem:easy_singularity}. Then, it follows that $a_{21}^3 = 1$. The assumption $a_{01} = 0$ implies $a_{10} = 0$, and $a_{23} = 0$ implies $a_{32} = 0$. In both cases, $C$ is singular by Lemma \ref{lem:easy_singularity}, so we may assume $a_{01}\neq 0$, $a_{23}\neq 0$. However, then $C$ is singular at $(x:\sqrt{a_{21}}x:\sqrt{a_{21}}x^2:1)$, for the choice $x = \left ( {a_{01}}/{a_{23}a_{21}} \right )^{1/4}$.
\end{itemize}

Next, assume $a_{11} \neq 0$ and $a_{22} = 0$; similarly for $a_{11} = 0, a_{22} \neq 0$. If $(a_{20}, a_{02})\neq 0$, then $\Car\omega_2$ and $\Car\omega_3$, and moreover $\Car\omega_1, \Car\omega_2, \Car\omega_3$ are linearly dependent since $\Car^2\omega_1 = 0$, so $a_{23} = a_{32} = 0$ and $C$ is singular by Lemma \ref{lem:easy_singularity}. Therefore $a_{20} = a_{02} = 0$. After writing $\Car\omega_1$ in terms of $\Car\omega_2$ and $\Car\omega_3$ and considering $\Car^2\omega_2 = \Car^2\omega_3 = 0$, we can express all the coefficients $a_{ij}\neq a_{23}, a_{32}$ in terms of $a_{01}, a_{10}$, and $a_{11}$. It is not hard to see that $(\sqrt{a_{10}}:0:0:1)$ will always be a singular point of such a $C$.  

The final possibility is $a_{11} \neq 0$ and $a_{22} \neq 0$. Consider the relations 
\begin{equation}
\Car\omega_1 = \sqrt[4]{\frac{a_{31}}{a_{11}}}\Car\omega_2 + \sqrt[4]{\frac{a_{13}}{a_{11}}}\Car\omega_3, \quad \text{and} \quad \Car\omega_4 = \sqrt[4]{\frac{a_{20}}{a_{22}}}\Car\omega_2 + \sqrt[4]{\frac{a_{02}}{a_{22}}}\Car\omega_3.
\label{eqn:Car_relations1}    
\end{equation}
From them and the rank condition, it follows that $\Car\omega_2$ and $\Car\omega_3$ are linearly independent, and we get eight relations 

\begin{equation}
\left\{\begin{matrix}
a_{11} =& \sqrt{\alpha}\cdot a_{01} + \sqrt{\beta}\cdot a_{10}\\ 
\alpha\cdot a_{11} =& \sqrt{\alpha}\cdot a_{21} + \sqrt{\beta}\\ 
\beta\cdot a_{11} =& \sqrt{\alpha} + \sqrt{\beta}\cdot a_{12}\\ 
0 =& \sqrt{\alpha}\cdot a_{23} + \sqrt{\beta}\cdot a_{32}\\ 
\end{matrix}\right., \quad \left\{\begin{matrix}
0 =& \sqrt{\gamma}\cdot a_{01} + \sqrt{\delta}\cdot a_{10}\\ 
\gamma\cdot a_{22} =& \sqrt{\gamma}\cdot a_{21} + \sqrt{\delta}\\ 
\delta\cdot a_{22} =& \sqrt{\gamma} + \sqrt{\delta}\cdot a_{12}\\ 
a_{22} =& \sqrt{\gamma}\cdot a_{23} + \sqrt{\delta}\cdot a_{32}\\ 
\end{matrix}\right.;
\label{eqn:Car_relations2}    
\end{equation}
where $$a_{31} = \alpha a_{11},\quad  a_{13} = \beta a_{11},\quad a_{20} = \gamma a_{22}, \text{ }\text{ and } \text{ } a_{02} = \delta a_{22},$$ for some $\alpha, \beta, \gamma, \delta 
\in k$. Note that $\alpha \neq 0, \beta \neq 0, \gamma \neq 0$, and $\delta \neq 0$ because of \eqref{eqn:Car_relations1} and $\Car^2\omega_1 = \Car^2\omega_4 = 0$, while $\Car\omega_1 \neq 0$ and $\Car\omega_4\neq 0$. 
If we write down $\Car^2\omega_2 = \Car^2\omega_3 = 0$ in terms of generators $\Car\omega_2$ and $\Car\omega_3$, we get another four relations $$\left\{\begin{matrix}
a_{21} =& {\alpha}\cdot a_{01} + {\gamma}\cdot a_{23}\\ 
1 =& {\alpha}\cdot a_{10} + {\gamma}\cdot a_{32}\\ 
1 =& {\beta}\cdot a_{01} + {\delta}\cdot a_{23}\\ 
a_{12} =& {\beta}\cdot a_{10} + {\delta}\cdot a_{32}\\ 
\end{matrix}\right..$$
Denote $\Delta = \det 
\begin{pmatrix}
\sqrt{\alpha} & \sqrt{\beta}\\ 
\sqrt{\gamma} & \sqrt{\delta} 
\end{pmatrix}$, and assume $\Delta \neq 0$. Solving the equations leads to $$a_{01} = \frac{a_{11}\sqrt{\delta}}{\Delta},\quad  a_{10} = \frac{a_{11}\sqrt{\gamma}}{\Delta},\quad  a_{23} = \frac{a_{22}\sqrt{\beta}}{\Delta} ,\quad a_{32} = \frac{a_{22}\sqrt{\alpha}}{\Delta},$$ $$a_{21} = \frac{\alpha\sqrt{\delta}a_{11} + \gamma\sqrt{\beta}a_{22}}{\Delta},\quad a_{12} = \frac{\sqrt{\alpha}{\delta}a_{22} + \sqrt{\gamma}{\beta}a_{11}}{\Delta}, \text{ }\text{ and}$$ $$ \frac{\alpha\sqrt{\gamma} a_{11} + \sqrt{\alpha}\gamma a_{22}}{\Delta} = 1 = \frac{\beta\sqrt{\delta} a_{11} + \sqrt{\beta}\delta a_{22}}{\Delta}.$$
It is not hard to see that the point $P = (x: y: xy: 1)$, with $(x, y) \in k^2$ a solution to the equation $\left\{\begin{matrix}
1 + \alpha x^2 + \beta y^2 = 0\\ 
\gamma x^2 + \delta y^2 + x^2y^2 = 0
\end{matrix}\right.$, is a singular point of such a $C$. Otherwise, $\Delta = 0$, so there is some $\lambda \in k^*$, so that $\gamma = \lambda \alpha, \delta = \lambda \beta$. 
The top relations in the left and right columns of \eqref{eqn:Car_relations2} imply that this cannot happen.

Let us now discuss whether a smooth genus-$4$ curve $C = V(XY + ZT, q)$ with $q$ of type $b)$ can be of the type $[4, 3]$. The Hasse-Witt matrix of $C$ is now $$H_C = \begin{pmatrix}
a_{11} & a_{31} & a_{13} & 0\\ 
a_{01} & a_{21} & 0 & a_{23}\\ 
a_{10} & 1 & a_{12} & a_{32}\\ 
0 & a_{20} & a_{02} & a_{22}
\end{pmatrix}.$$ 
With similar arguments as above, the cases $a_{11} =  0$ or $a_{22} = 0$ cannot occur for such a smooth curve $C$. As previously, we assume $a_{11} \neq 0, a_{22} \neq 0$, and write $$a_{31} = \alpha a_{11}, \quad a_{13} = \beta a_{11},\quad a_{20} = \gamma a_{22}, \text{ }\text{ and }\text{ } a_{02} = \delta a_{22}, $$ for some $\alpha, \beta, \gamma, \delta \in k$. Since $\Car^2\omega_1 = \Car^2\omega_4 = 0$, $\Car\omega_1 \neq 0$ and $\Car\omega_4 \neq 0$ it follows that $\alpha\neq 0$ and $\gamma \neq 0$. Moreover, we can assume $\Delta \neq 0$ with $\Delta = \det 
\begin{pmatrix}
\sqrt{\alpha} & \sqrt{\beta}\\ 
\sqrt{\gamma} & \sqrt{\delta} 
\end{pmatrix}$ since $a_{11}$ would be zero otherwise (by discussing the relations similar to the ones in \eqref{eqn:Car_relations2}), as well as $\beta\neq 0$ and $\delta \neq 0$ since it is not hard to see that $C$ would be singular otherwise (by looking at the charts $\{Z \neq 0\}$ and $\{T \neq 0\}$).
From the relations \eqref{eqn:Car_relations1}, and $\Car^2\omega_2 = \Car^2\omega_3 = 0$ we find that $$ a_{01}= \frac{a_{11}\sqrt{\delta}}{\Delta},\quad a_{10}= \frac{a_{11}\sqrt{\gamma}}{\Delta},\quad a_{23}= \frac{a_{22}\sqrt{\beta}}{\Delta},\quad  a_{32}= \frac{a_{22}\sqrt{\alpha}}{\Delta}, $$
$$a_{12} = \sqrt{\beta}\cdot a_{11} = \sqrt{\delta}\cdot a_{22}, 
\quad a_{21} = \frac{a_{11}\Delta}{\sqrt{\delta}} = \frac{a_{22}\Delta}{\sqrt{\beta}}, \quad a_{11} = \sqrt{\frac{\delta}{\alpha\gamma}},  \text{ }\text{ and } \text{ } a_{22} = \sqrt{\frac{\beta}{\alpha\gamma}}.$$

Similarly as in the first part of this proof and using (all of) the obtained formulas, we find that the point $P = (x: y: xy: 1)$, with $(x, y) \in k^2$ a solution to the equation $$\left\{\begin{matrix}
1 + \alpha x^2 + \beta y^2 = 0\\ 
\gamma x^2 + \delta y^2 + x^2y^2 = 0
\end{matrix}\right.$$ is a singular point of such $C$. That concludes our case-by-case analysis. 
\end{proof}

\begin{rem}
Let $\alpha, \beta, \gamma, \delta, \Delta, a_{11}$, and $a_{22}$ be non-zero elements of $k$ and $q = q(X, Y, Z, T)$ a cubic polynomial as above. Denote $f(x, y) = q(x, y, xy, 1)$. In the proof of both parts of the previous Proposition, we used that a point $P = (p_x:p_y:p_xp_y:1)\in \P^3$ is a singular point of $C$ if and only if $$\frac{\partial f}{\partial x}(p_x, p_y) = 0, \quad \frac{\partial f}{\partial y}(p_x, p_y) = 0, \text{ }\text{ and }\text{ }(f + x\frac{\partial f}{\partial x}+ y\frac{\partial f}{\partial y})(p_x, p_y) = 0.$$ Note that  $$f + x\frac{\partial f}{\partial x}+ y\frac{\partial f}{\partial y} = a_{11}(1 + \alpha x^2 + \beta y^2) + a_{22}(\gamma x^2 + \delta y^2 + x^2y^2).$$ 
\end{rem}

As a consequence of the results presented in this section, we have the following Corollary.

\begin{cor}
In characteristic two, there are no smooth curves of genus four with the Ekedahl-Oort type $[4, 3]$.
\label{cor:no_43_eo_smth_curves}
\end{cor}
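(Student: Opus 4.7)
The plan is to assemble the corollary directly from Proposition~\ref{prop:eo43_he}, Proposition~\ref{prop:eo43_cone}, and Proposition~\ref{prop:eo43_smthquad}, which exhaust the possibilities for a smooth genus-four curve in characteristic two. Indeed, every smooth genus-four curve $C$ over an algebraically closed field $k$ of characteristic two is either hyperelliptic or non-hyperelliptic (trigonal), and in the latter case its canonical model in $\mathbb{P}^3$ lies on a quadric which, after an appropriate change of coordinates over $\bar{k}$, is either the non-singular quadric $Q_{ns}$ or the quadric cone $Q_c$. Since the Ekedahl-Oort type is preserved under base change to $\bar{k}$, we may work over an algebraic closure.

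First, I would note that an abelian fourfold with Ekedahl-Oort type $[4,3]$ has $\mu_1 = 4$, hence $\prank = g - \mu_1 = 0$; for an abelian variety in characteristic two this is the same as $\tworank = 0$. If $C$ were hyperelliptic with $\mu_C = [4,3]$, then in particular $\tworank(C) = 0$, and Proposition~\ref{prop:eo43_he} would force $\mu_C = [4,2]$, a contradiction.

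If instead $C$ is non-hyperelliptic, the canonical model of $C$ is cut out in $\mathbb{P}^3$ by the intersection of a quadric and a cubic, and the quadric is (up to coordinate change over $\bar{k}$) either $Q_{ns}$ or $Q_c$. In the first case Proposition~\ref{prop:eo43_smthquad} rules out the Ekedahl-Oort type $[4,3]$, and in the second case Proposition~\ref{prop:eo43_cone} does the same. Combining the three disjoint cases yields the corollary.

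There is essentially no further obstacle, since all the substantive work (the Hasse-Witt matrix computations and case analysis) is already contained in the three cited propositions; the only thing to check carefully is that the trichotomy \emph{hyperelliptic / trigonal on $Q_{ns}$ / trigonal on $Q_c$} genuinely covers all smooth genus-four curves in characteristic two, which is the classical description of the canonical model of a non-hyperelliptic genus-four curve and is used freely throughout the section.
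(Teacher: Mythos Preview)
Your proposal is correct and follows exactly the same approach as the paper: the authors' proof simply cites Propositions~\ref{prop:eo43_he}, \ref{prop:eo43_cone}, and \ref{prop:eo43_smthquad} as covering the three possible cases. Your added remarks (that $\mu=[4,3]$ forces $\tworank=0$, and that the hyperelliptic/$Q_{ns}$/$Q_c$ trichotomy exhausts smooth genus-four curves) are implicit in the paper and make the deduction explicit, but do not deviate from its argument.
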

\begin{proof}
This follows from Proposition \ref{prop:eo43_he}, Proposition \ref{prop:eo43_cone}, and Proposition \ref{prop:eo43_smthquad}.
\end{proof}

We end this section with two remarks on the result obtained in Corollary \ref{cor:no_43_eo_smth_curves}.

\begin{rem}
In contrast to Corollary \ref{cor:no_43_eo_smth_curves}, in \cite{zhou_genus4}, Theorem 1.2,  Zhou shows the existence of a smooth genus-$4$ curve over $\Fbar_p$ with the Ekedahl-Oort type $[4, 3]$ for any odd prime number $p$ with $p \equiv \pm 2 \mod 5$. 
\end{rem}

\begin{rem}
\label{rem:eo43_implies_ss3dim}
Note that $Z_{[4, 3]}\cap j(\partial {\cM_g^{ct}})$ is a $2$-dimensional locus - the only singular, stable genus-$4$ curves with the type $ [4, 3]$ are the ones whose two components are two supersingular genus-$2$ curves; this follows from Proposition \ref{prop:eo_classification}. We can combine that observation with Corollary \ref{cor:no_43_eo_smth_curves} and Example \ref{ex:smooth_ss_curve} to give the second proof of Theorem \ref{thm:supersingular_curves_dim3}, using that $Z_{[4, 3]}\subseteq \cS_4$ is a $3$-dimensional locus.
\end{rem}
\newpage
\section{Generic $a$-number}

Let $k$ be an algebraically closed field in characteristic $p>0$, and consider $\cM_g = \cM_g\otimes \Fbar_p$. In \cite{pries_a_number}, using the induction and the intersection with the boundary strata, Pries shows that the generic point of any component of the $\prank \leq f$ locus of smooth genus-$g$ curves $$V_{f} \cM_g,$$ for $f = g- 2$ or $f = g- 3$, has $a$-number one. Having Pries's inductive argument, which we present below, it was enough to show that the generic points of all components of $V_0 \cM_2$ and $V_0 \cM_3$ have $a$-number one.

\begin{prop}
Let $g\geq 2$, $1\leq f <g$, and $\cM_g = \cM_g \otimes \Fbar_p$. If the generic point $X$ of any component of $V_f \cM_g$ has $a(X) = 1$, then the generic point $Y$ of any component of $V_{f + 1} \cM_{g + 1}$ has $a(Y) = 1$. 

In particular, if the generic point $X$ of any component of the locus $V_0 \cM_g$ has $a(X) = 1$, then for any $h\geq g$, the generic point $Y$ of any component of $V_{h-g} \cM_h$ has $a(Y) = 1$.
\label{prop:pries_induction}
\end{prop}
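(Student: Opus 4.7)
The plan is to specialize a smooth curve in $V_{f+1}\cM_{g+1}$ to a reducible compact-type curve with components of genera $g$ and $1$, using that both $p$-rank and $a$-number are additive across such a decomposition, and then apply the induction hypothesis to the genus-$g$ factor.

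Let $W\subset V_{f+1}\cM_{g+1}$ be an irreducible component and let $\eta_W$ be its generic point. Since $\prank(\eta_W)=f+1<g+1$, $\Jac_{\eta_W}$ is non-ordinary, so $a(\eta_W)\geq 1$. By upper semicontinuity of the $a$-number on $\cM_{g+1}^{ct}$, it suffices to produce a point $x$ in the closure $\overline{W}\subset V_{f+1}\cM_{g+1}^{ct}$ with $a(x)=1$. I would look for $x$ on the boundary divisor $\Delta_1$, namely the image of the clutching map
\[
\kappa\colon \cM_{g,1}\times \cM_{1,1}\longrightarrow \cM_{g+1}^{ct}
\]
that attaches an elliptic tail at a marked point. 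For $(C_g,E)\in V_f\cM_{g,1}\times V_1\cM_{1,1}$ with $C_g$ generic in a component of $V_f\cM_g$ and $E$ ordinary, the induction hypothesis together with additivity of the $a$-number gives $a(\kappa(C_g,E))=a(C_g)+a(E)=1+0=1$, so any such $\kappa(C_g,E)$ in $\overline{W}$ works.

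The main obstacle is to establish that $\overline{W}$ actually meets $\kappa\bigl(V_f\cM_{g,1}\times V_1\cM_{1,1}\bigr)$, and not only the sub-locus $\kappa\bigl(V_{f+1}\cM_{g,1}\times V_0\cM_{1,1}\bigr)$ of $\Delta_1$ or only the boundary divisors $\Delta_{g_1}$ with $g_1\geq 2$, where the induction hypothesis is not directly available. The key input is purity of the $p$-rank stratification on $\cM_{g+1}^{ct}$ (\cite{fabervdgeer}, Theorem 2.3), which forces $\overline{W}$ to be pure of dimension $2g+f$ with every component of $\overline{W}\cap \partial \cM_{g+1}^{ct}$ of dimension $2g+f-1$. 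Combined with an explicit degeneration argument constructing, inside $W$, a one-parameter family of smooth curves whose limit is of the form $C_g\cup E$ with $\prank(C_g)=f$ and $E$ ordinary---for instance by smoothing a stable curve consisting of a $V_f$-generic genus-$g$ component glued to an ordinary elliptic tail---this produces the required boundary point $x$. Once $x$ is in hand, semicontinuity yields $a(\eta_W)\leq 1$, hence $a(\eta_W)=1$, and the ``in particular'' assertion follows by iterating the one-step induction, using that the statement just proved serves as the hypothesis at the next step with $(g,f)$ replaced by $(g+1,f+1)$.
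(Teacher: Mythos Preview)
The paper does not prove this proposition; it simply cites \cite{pries_a_number}, Proposition~3.7. Your outline does follow Pries's strategy---degenerate to the boundary, use additivity of the $p$-rank and the $a$-number on a clutched curve, and conclude by upper semicontinuity---so the overall plan is the right one.

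There is, however, a genuine gap at the step you yourself flag as the main obstacle. You propose to produce the required boundary point ``for instance by smoothing a stable curve consisting of a $V_f$-generic genus-$g$ component glued to an ordinary elliptic tail''. But smoothing runs in the wrong direction: it yields a family whose \emph{special} fibre is the chosen boundary curve and whose generic fibre lies in \emph{some} component of $V_{f+1}\cM_{g+1}$, with nothing forcing that component to be the given $W$. Your argument therefore only shows that at least one component of $V_{f+1}\cM_{g+1}$ has generic $a$-number~$1$, whereas the proposition asserts this for \emph{every} component. What is actually needed is an argument starting from $W$: one must show that the closure $\overline{W}\subset\overline{\cM}_{g+1}$ necessarily meets the boundary, identify which boundary strata it meets, and then check---via the purity dimension count you cite together with an analysis of the components of $V_{f+1}$ restricted to each $\Delta_i$---that a generic point of the intersection has $a$-number~$1$. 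Note in particular that on $\Delta_1$ there are also components of the form $\kappa\bigl(V_{f+1}\cM_{g,1}\times V_0\cM_{1,1}\bigr)$, of the \emph{same} dimension $2g+f-1$, whose generic $a$-number is at least~$2$; ruling these out for $\overline{W}$, or passing instead through $\Delta_0$, is where the real content lies, and your sketch does not supply it.
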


\begin{proof}
See \cite{pries_a_number}, Proposition 3.7. 
\end{proof}

Here, we show that for $p = 2$, the generic point of any component of $V_0\cap \cJ_4$ has $a$-number one. We start by showing that for $p = 2$, there is a smooth genus-$4$  curve $C$ with $\prank(C) = 0$ and $a(C) = 1$.

\begin{exmp}
The curve $C:\left\{\begin{matrix}
XY + ZT = 0\\ 
X^2Z + Y^2Z + YZ^2 + X^2T + Y^2T + XT^2 = 0
\end{matrix}\right. \text{ in } \P^3 $ from Example \ref{ex:smooth_ss_curve}   has $a$-number $a(C) = 1$. 

That follows from \eqref{eq:hassewitt_split_quadric}, since $$H_C = \begin{pmatrix}
0 & 1 & 1 & 0\\ 
0 & 0 &0 & 1\\ 
1 & 0 & 0 & 0\\ 
0 &1 & 1 & 0
\end{pmatrix},$$ and thus $a(C) = 4 - \rank H_C = 1$.
\label{exmp:ss_prank0_anum1}
\end{exmp}

\noindent A specificity of working with curves $C$ in positive characteristics is that there are certain non-trivial constraints among $p, g = g(C)$, and $a = a(C)$ that should be satisfied for such curves $C$ to exist. For example, we have the following result by Zhou. 

\begin{prop}
Let $C$ be a smooth genus-$g$ curve defined over a field of characteristic $p$. If the rank of the Cartier operator of $C$ equals $1$, i.e., if $a(C) = g - 1$, then $$g \leq p + \frac{p(p-1)}{2}.$$
\label{prop:zhou_a}
\end{prop}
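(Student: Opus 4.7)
The plan is to use the rank-one condition on the Cartier operator $\mathcal{C}$ to impose strong constraints on $H^0(C, \Omega^1_C)$ and then reduce the problem to Ekedahl's classical superspecial bound $g \leq p(p-1)/2$, absorbing the one-dimensional image of $\mathcal{C}$ into the extra $p$ in the stated inequality.

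First, I would fix a basis $\{\omega_1,\ldots,\omega_g\}$ of $H^0(C, \Omega^1_C)$ adapted to $\mathcal{C}$: since $\rank \mathcal{C} = 1$, we may arrange that $\omega_1,\ldots,\omega_{g-1} \in \ker \mathcal{C}$ and $\mathcal{C}(\omega_g) = \omega_0$ for some nonzero $\omega_0$. Because $\mathcal{C}$ kills exact differentials and is $p^{-1}$-linear, each $\omega_i$ with $i \leq g-1$ is locally exact. I would then split into two cases: (a) $\omega_0 \in \ker \mathcal{C}$, so $\mathcal{C}^2 = 0$ on $H^0(C, \Omega^1_C)$ and $C$ has $p$-rank zero; (b) $\mathcal{C}(\omega_0) = c\omega_0$ with $c\neq 0$, so $C$ has $p$-rank exactly one and an ordinary piece of dimension one splits off from $\Jac_C$.

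Next, I would translate the algebraic condition into a geometric constraint on the canonical model. If $U \subset H^0(C, \Omega^1_C)$ is a codimension-one subspace of locally exact differentials, then the canonical map $\varphi_K : C \to \mathbb{P}^{g-1}$ factors through the Frobenius twist up to the one-dimensional correction spanned by $\omega_0$. This suggests that, up to a degree-$p$ modification, the curve is close to being a cover of another curve $C'$ whose Jacobian contains a superspecial subgroup of dimension at least $g-p$ (case (a)), or one can directly remove the ordinary factor (case (b)). Applying the classical bound $g' \leq p(p-1)/2$ for the superspecial piece then yields the desired $g \leq p + p(p-1)/2$.

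The main obstacle will be making the superspecial reduction rigorous. In general, $\rank \mathcal{C} = 1$ does not arise from a bona fide degree-$p$ cover of another curve, so the argument has to be carried out at the level of the Dieudonn\'e module of $\Jac_C[p]$, using the classification of $\mathrm{BT}_1$ group schemes (in the spirit of Proposition \ref{prop:eo_classification}) to exhibit a superspecial subgroup of $\Jac_C$ of dimension at least $g - p$. Establishing this dimension bound, and then combining it with Ekedahl's inequality, is where most of the technical work lies; the combinatorial bookkeeping with the basis $\{\omega_i\}$ is by comparison routine.
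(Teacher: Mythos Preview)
There is a genuine gap in your plan, and it is precisely at the ``superspecial reduction'' step. Ekedahl's inequality $g\le p(p-1)/2$ is a statement about smooth curves whose \emph{entire} Jacobian is superspecial; its proof uses the geometry of the curve (the interaction of regular differentials with Frobenius), not just the $\mathrm{BT}_1$ structure of the Jacobian. There is no analogous bound for sub-abelian-varieties or sub-$\mathrm{BT}_1$'s of a Jacobian: principally polarized abelian varieties of arbitrarily large dimension can have $a$-number equal to $g$, or can contain superspecial sub-abelian-varieties of any prescribed dimension. So even if you managed to exhibit a superspecial piece of $\Jac_C[p]$ of rank $p^{2(g-p)}$ via the Dieudonn\'e module, this would give you no bound on $g$ whatsoever. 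The statement you want is genuinely curve-theoretic, and purely group-scheme arguments cannot prove it.

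The paper does not give its own proof here; it cites Zhou's paper, whose argument stays on the curve side throughout. The idea there is to exploit directly that a $(g-1)$-dimensional subspace of $H^0(C,\Omega^1_C)$ consists of locally exact differentials, and to use this (via divisors of functions and a Riemann--Hurwitz / degree-counting argument, in the spirit of Re's earlier work on the superspecial case) to bound $g$. Your first step---choosing a basis adapted to $\ker\mathcal{C}$ and observing that $\omega_1,\dots,\omega_{g-1}$ are locally exact---is the right starting point, but from there the argument must proceed by analyzing the resulting rational functions on $C$, not by passing to $\Jac_C[p]$.
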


\begin{proof}
See \cite{zhou_a_number}, Theorem 1.1.
\end{proof}

Using the mentioned results in case $p = 2$, we extract information about the $a$-numbers of some $2$-rank strata.

\begin{thm} Let $\cJ_4 = \cJ_4\otimes \Fbar_2$  and let $V_0\subseteq \cA_4=\cA_4 \otimes \Fbar_2$ be the $\tworank$ zero locus. Let $X$ be a generic point of a component of $V_0\cap \cJ_4$, the $\tworank$ zero locus of Jacobians. Then $a(X) = 1$.
\label{thm:generic_a_num}
\end{thm}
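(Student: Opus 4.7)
The plan is to combine the Faber--van der Geer purity result with the Ekedahl--Oort classification of Proposition \ref{prop:eo_classification}. By \cite{fabervdgeer}, Theorem 2.3, the locus $V_0\cM_4$ is pure of codimension $4$ in $\cM_4$, hence of dimension $5$. The image in $V_0\cap\cJ_4$ of singular compact-type curves has dimension at most $3$: for a $\Delta_1$-type curve $C_3\cup E$ with $\tworank$ zero one has $\Jac_{C_3\cup E}=\Jac_{C_3}\times E$ with the gluing-point datum lost under $j$, giving image dimension $\dim V_0\cA_3 + 0 = 3$; the $\Delta_2$ and deeper boundary contributions are smaller. Since the codimension inequality in the smooth ambient $\cA_4$ forces every irreducible component of $V_0\cap\cJ_4$ to have dimension at least $5$, each component $\mathcal{W}$ has dimension exactly $5$ and its generic point $X$ corresponds to a smooth curve.

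Let $\mu = \mu_X$ be the Ekedahl--Oort type of $X$. From $\mathcal{W}\subseteq Z_\mu\cap\cJ_4$ and $\dim Z_\mu = 10 - \sum\mu_i\geq 5$, we must have $\sum\mu_i\leq 5$. Since $\mu=[4,3]$ is excluded on smooth curves by Corollary \ref{cor:no_43_eo_smth_curves}, only $\mu\in\{[4],[4,1]\}$ remains. If $\mu=[4]$, then $a(X)=1$ as required.

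The main obstacle is to exclude $\mu=[4,1]$. By Proposition \ref{prop:ekedahlvdgeer}, $Z_{[4,1]}$ is irreducible of dimension $5$; so $\mu=[4,1]$ would force $\mathcal{W}=Z_{[4,1]}$ and $Z_{[4,1]}\subseteq\cJ_4$, i.e., every principally polarized abelian fourfold over $\Fbar_2$ of Ekedahl--Oort type $[4,1]$ would arise as a Jacobian of a compact-type curve. (And in fact of a smooth curve, since by Proposition \ref{prop:eo_classification} the $p$-torsion $I_{4,2}$ is indecomposable and hence rules out non-trivial polarized product decompositions.) I would rule this out by the finite-field point-count strategy of Section \ref{sec:jacobians}: enumerate, via Proposition \ref{thm:552}, the principal polarizations in a suitable $\F_2$-isogeny class of abelian fourfolds with Newton polygon $(1/4,1/4,1/4,1/4,3/4,3/4,3/4,3/4)$ and $a$-number $2$ (whose elements lie in $Z_{[4,1]}$ by Remark \ref{rem:a_num_2_eo_np}), and compare against the Jacobian stack count extracted from the data of \cite{xarles}. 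Any discrepancy exhibits a principally polarized abelian fourfold in $Z_{[4,1]}$ that is not a Jacobian, forcing $\dim(Z_{[4,1]}\cap\cJ_4)\leq 4$. Therefore $\mu=[4]$ and $a(X)=1$ for every component.
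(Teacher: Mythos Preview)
Your setup is sound through the reduction to $\mu\in\{[4],[4,1]\}$ (and in fact the appeal to Corollary~\ref{cor:no_43_eo_smth_curves} is unnecessary there: $\sum\mu_i\leq 5$ already rules out $[4,3]$, since $4+3=7$). The genuine gap is in excluding $\mu=[4,1]$. You do not carry out the point-count computation; you only propose it. Worse, the proposed strategy has a conceptual obstacle: the $a$-number is \emph{not} an isogeny invariant, so an ``$\F_2$-isogeny class with $a$-number $2$'' is not a well-defined object, and the machinery of Proposition~\ref{thm:552} only tells you the Newton polygon of the principally polarized abelian varieties it produces, not their Ekedahl--Oort type. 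A discrepancy between the stack counts would only exhibit a non-Jacobian in $\mathcal{N}_{1/4}$, which might well have type $[4]$ rather than $[4,1]$; that would say nothing about $Z_{[4,1]}\subseteq\cJ_4$. You would need an additional mechanism to pin down the $a$-number of the non-Jacobian you construct, and you have not supplied one.

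The paper avoids all of this with a short and computation-free argument. If $Z_{[4,1]}\subseteq\cJ_4$, then since $\cJ_4$ is closed one gets $\overline{Z_{[4,1]}}\subseteq\cJ_4$, and in particular $Z_{[4,3,1]}\subseteq\cJ_4$. By Proposition~\ref{prop:eo_classification} the $\mathrm{BT}_1$ group scheme $I_{4,3}$ for type $[4,3,1]$ is indecomposable, so no product of lower-dimensional Jacobians can land in $Z_{[4,3,1]}$; hence $Z_{[4,3,1]}\subseteq\cJ_4^0$. But every point of $Z_{[4,3,1]}$ has $a$-number $3$, and Zhou's bound (Proposition~\ref{prop:zhou_a}) with $p=2$ gives $g\leq 3$ for smooth curves with $a=g-1$, so there are no smooth genus-$4$ curves in characteristic $2$ with $a$-number $3$. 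This contradiction yields $\dim(\overline{Z_{[4,1]}}\cap\cJ_4)\leq 4$ directly. The key idea you are missing is to pass from $Z_{[4,1]}$ down to the deeper, still indecomposable stratum $Z_{[4,3,1]}$ and invoke Zhou's bound; this replaces your proposed finite-field computation entirely.
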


\begin{proof}
Since there is a smooth genus-$4$ curve over $\Fbar_2$ with $\prank$ $0$ and $a$-number $1$, by Example \ref{exmp:ss_prank0_anum1}, we see that $Z_{[4]}\cap \cJ_4\neq 0$, and thus that the Ekedahl-Oort stratum $Z_{[4]}\cap \cJ_4$ needs to be of dimension $$\dim(Z_{[4]}\cap \cJ_4) = \dim(V_0\cap \cJ_4) = 5.$$ Therefore, to get the result, we need to show that all components of $\overline{Z_{[4, 1]}}\cap \cJ_4$ have dimension $\leq 4$. 

For a contradiction, assume that $\overline{Z_{[4, 1]}}\subseteq \cJ_4$ is of dimension $5$, using Proposition \ref{prop:ekedahlvdgeer}. Then all the Ekedahl-Oort strata $Z_{\mu}$ for $\mu \leq [4, 1]$ are contained in $\cJ_{4}$. In particular, $Z_{[4, 3, 1]}$ is a two-dimensional non-empty subvariety of $\cJ_4$. By Proposition \ref{prop:eo_classification} and using that $I_{4, 3}$ is an indecomposable symmetric $\mathrm{BT}_1$ group scheme, it follows that $Z_{[4, 3, 1]}$ is contained in the open Torelli locus $\cJ_4^0$. However, by Proposition \ref{prop:zhou_a}, there are no smooth genus-$4$ curves over $k$ with $a$-number $3$. Hence $\dim \overline{Z_{[4, 1]}}\cap \cJ_4 \leq 4$. 
\end{proof}

\begin{rem}
\label{rem:supersingular_locus_3dim}
The way we used the type $[4, 3, 1]$ in the previous argument can also be employed, together with Example \ref{ex:smooth_ss_curve}, to give the third proof of Theorem \ref{thm:supersingular_curves_dim3}.
\end{rem}

As a consequence of the previous argument, we obtain the following result.

\begin{cor} Let $\cJ_4 = \cJ_4 \otimes \Fbar_2$. Then the Ekedahl-Oort strata $Z_{[4]}\cap \cJ_4$, $Z_{[4, 1]}\cap \cJ_4$, and $Z_{[4, 2]}\cap \cJ_4$ are respectively of the expected codimensions $4, 5,$ and $6$ in $\cJ_4$, while $Z_{\mu}\cap \cJ_4^0 = \o$ exactly for $\mu \in \{[4, 3], [4, 2, 1], [4, 3, 1], [4, 3, 2], [4, 3, 2, 1]\}$.  
\label{cor:eo_conclusion}
\end{cor}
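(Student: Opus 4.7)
The plan is to combine the dimension results of Theorem \ref{thm:generic_a_num} with the indecomposability statements in Proposition \ref{prop:eo_classification}, the irreducibility from Proposition \ref{prop:ekedahlvdgeer}, the smooth-curve constraint of Proposition \ref{prop:zhou_a}, and the non-existence result of Corollary \ref{cor:no_43_eo_smth_curves}.

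I would first dispense with $Z_{[4]} \cap \cJ_4$: Theorem \ref{thm:generic_a_num} shows that every component of $V_0 \cap \cJ_4$ has generic $a$-number one, so $Z_{[4]} \cap \cJ_4$ is open and dense in $V_0 \cap \cJ_4$, and that same theorem's proof identifies its dimension as $5$, giving codimension $4$ in $\cJ_4$. For $Z_{[4,1]} \cap \cJ_4$, Example \ref{exmp:EO41nonempty} provides a point, so the standard codimension inequality forces every component to have dimension at least $10 - 5 - 1 = 4$; the matching upper bound $\dim \leq 4$ is recorded inside the proof of Theorem \ref{thm:generic_a_num}, since otherwise the irreducibility from Proposition \ref{prop:ekedahlvdgeer} would force $\overline{Z_{[4,1]}} \subseteq \cJ_4$, which is excluded by the indecomposability of $I_{4,3}$ together with Proposition \ref{prop:zhou_a}.

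The heart of the argument is $Z_{[4,2]} \cap \cJ_4$. The hyperelliptic curve from Example \ref{ex:isog_class_ss} (supersingular, hence of $\tworank$ zero) has Ekedahl-Oort type $[4,2]$ by Proposition \ref{prop:eo43_he}, giving a point of $Z_{[4,2]} \cap \cJ_4^0$, so by the codimension inequality each component has dimension at least $3$. The upper bound $\leq 3$ is where I would run the main argument: assume for contradiction that some component has dimension $4$. Since $Z_{[4,2]}$ is irreducible of dimension $4$ by Proposition \ref{prop:ekedahlvdgeer}, that component must equal all of $Z_{[4,2]}$, so $\overline{Z_{[4,2]}} \subseteq \cJ_4$. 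Reading off the Ekedahl-Oort diagram gives $Z_{[4,3,1]} \subseteq \overline{Z_{[4,2,1]}} \subseteq \overline{Z_{[4,2]}}$, and Proposition \ref{prop:eo_classification} identifies $Z_{[4,3,1]}$ with the indecomposable $\mathrm{BT}_1$ scheme $I_{4,3}$, which cannot arise as the $p$-torsion of a Jacobian of a reducible stable curve of compact type; hence $Z_{[4,3,1]} \subseteq \cJ_4^0$. Finally, Proposition \ref{prop:zhou_a} with $p = 2$ forbids a smooth genus-$4$ curve of $a$-number $3$, a contradiction. This is the step I expect to be the main obstacle, as it requires simultaneously invoking the stratum diagram, irreducibility, indecomposability, and the bound on $a$-numbers of smooth curves.

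For the emptiness assertions $Z_\mu \cap \cJ_4^0 = \o$ I would proceed case by case: $\mu = [4,3]$ is Corollary \ref{cor:no_43_eo_smth_curves}; the three types $\mu \in \{[4,2,1], [4,3,1], [4,3,2]\}$ with $a = 3$ are ruled out by Proposition \ref{prop:zhou_a}; and $\mu = [4,3,2,1]$ is excluded because no smooth curve of genus $g \geq 2$ in characteristic two is superspecial. For the word ``exactly'', I would note that the complementary types $[4]$, $[4,1]$, $[4,2]$ each admit a smooth representative, namely Example \ref{ex:smooth_ss_curve}, Example \ref{exmp:EO41nonempty}, and (once more) the hyperelliptic curve of Example \ref{ex:isog_class_ss} via Proposition \ref{prop:eo43_he}.
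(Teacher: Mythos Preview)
Your proposal is correct and follows essentially the same approach as the paper: the paper handles $[4]$ inside the proof of Theorem \ref{thm:generic_a_num}, produces the same explicit witnesses for $[4,1]$ and $[4,2]$ (Examples \ref{exmp:EO41nonempty} and \ref{ex:isog_class_ss} with Proposition \ref{prop:eo43_he}), and for the upper bounds simply says the argument is ``analogous to the one above'', which is exactly the irreducibility-plus-$I_{4,3}$-plus-Zhou chain you spell out. Your treatment of the emptiness cases is if anything slightly more careful than the paper's, since you separate the superspecial case $[4,3,2,1]$ from the $a=3$ cases rather than sweeping them all under Proposition \ref{prop:zhou_a}.
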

\begin{proof}
Note that $Z_{[4, 2]}\cap \cJ_4 \neq \o$ since the hyperelliptic curves of $\tworank = 0$ have the type $[4, 2]$, and the curve $$C: y^2 + y = x^9 + x^5$$ considered in Example \ref{ex:isog_class_ss} is a supersingular curve, so a curve of $\tworank$ zero.  The result for the type $[4, 2]$ follows by the argument analogous to the one above. A similar argument can also be used for the type $[4, 1]$ since, by Example \ref{exmp:EO41nonempty}, we see that the curve $C$ given by the equations $$C:\left\{\begin{matrix}
XY + T^2 = 0\\ 
TX^2 + Y^3 + X^2Z + Z^3 = 0
\end{matrix}\right. \text{ in } \P^3,$$ has the type $[4, 1]$. The conclusion for $\mu = [4]$ was already made in the proof of Theorem \ref{thm:generic_a_num}. Lastly, the result for $\mu \leq [4, 2, 1]$ follows from Proposition \ref{prop:zhou_a}, and for $\mu = [4, 3]$, it follows from Corollary \ref{cor:no_43_eo_smth_curves}.
\end{proof}

The argument offered in the proof of Theorem \ref{thm:generic_a_num} gives us the following observation. 

\begin{rem}
Let $p>0$ be an arbitrary prime number and let $[4, 3, 1]$ be the Ekedahl-Oort type in characteristic $p$ and dimension $4$, and $\cJ_4^0 = \cJ_4^0 \otimes \Fbar_p$. Assume that $Z_{[4, 3, 1]} \not \subseteq \cJ_4^0$ (e.g., that the locus of smooth genus-$4$ curves with the Ekedahl-Oort type $[4, 3, 1]$ in characteristic $p$ is not $2$-dimensional). Then the analog of Theorem \ref{thm:generic_a_num} holds: for any generic point $X$ of a component $(V_0\cap \cJ_4)\otimes \Fbar_p$, we have $a(X) = 1$. Moreover, that implies the existence of a smooth genus-$4$ curve in characteristic $p$ with the Ekedahl-Oort type $[4]$.
\end{rem}

\begin{rem}
By Theorem \ref{thm:supersingular_curves_dim3}, Corollary \ref{cor:np_conclusion}, and Corollary \ref{cor:eo_conclusion}, we know the dimensions (of any component) of the intersection of $\cJ_4$ and Newton and Ekedahl-Oort strata inside $V_0$ in characteristic two. However, we cannot say much about their irreducibility or the number of irreducible components. In fact, we do not even know that for $V_0 \cap \cJ_4$. For now, we only know that $V_0 \cap \cJ_g$ is connected for any $g\geq 2$ in any characteristic $p>0$ by Achter and Pries in \cite{achterpries_prankconn}, Corollary 4.5 using \cite{achterpries_monodromy}, Proposition 3.4.
\end{rem}

\normalsize

\end{document}